\DeclareMathOperator{\supp}{supp}
\DeclareMathOperator*{\esup}{\text{ess\,sup }}
\newcommand{\sgn}{\text{sgn}}
\newcommand{\ran}{\rangle}
\newcommand{\lan}{\langle}
\newcommand{\vv}{{\vec{\bf v}}}
\newcommand{\ww}{{\vec{\bf w}}}
\newcommand{\uu}{\vec{\bf u}}
\newcommand{\vphi}{\varphi}
\newcommand{\ma}{\mathcal{A}}
\newcommand{\mx}{\mathcal{X}}
\newcommand{\qform}{{\mathcal{Q}}}
\newcommand{\field}[1]{\mathbf{#1}}
\newcommand{\R}{\field{R}}
\newcommand{\bea}{\begin{eqnarray}}
\newcommand{\cend} {\end{center}}
\newcommand{\eea}{\end{eqnarray}}
\newcommand{\disp}{\displaystyle}
\newcommand{\ra}{\rightarrow}
\newcommand{\beginc} {\begin{center}}
\newcommand{\Sn}{\mathcal{S}}
\newtheorem{thm}{\textbf{Theorem}}[section]
\newtheorem{lem}[thm]{\textbf{Lemma}}
\newtheorem{pro}[thm]{\textbf{Proposition}}
\newtheorem{rem}[thm]{\textbf{Remark}}
\newtheorem{defn}[thm]{\textbf{Definition}}
\theoremstyle{remark}
\theoremstyle{definition}
\newtheoremstyle{Claim}{}{}{\itshape}{}{\itshape\bfseries}{:}{ }{#1}
\theoremstyle{Claim}
\title{Global Sobolev
  inequalities and Degenerate P-Laplacian equations} 
\author{David Cruz-Uribe, OFS, Scott Rodney, and Emily
  Rosta}
\address{David Cruz-Uribe, OFS \\
Dept. of Mathematics \\
University of Alabama \\
 Tuscaloosa, AL 35487, USA}
\email{dcruzuribe@ua.edu}
\address{Scott Rodney\\
Dept. of Mathematics, Physics and Geology \\ 
Cape Breton University \\
Sydney, NS B1Y3V3, CA} 
\email{scott\_rodney@cbu.ca}
\thanks{D.~Cruz-Uribe is supported by NSF Grant DMS-1362425 and
  research funds from the Dean of the College of Arts \& Sciences, the
  University of Alabama. S.~Rodney is supported by the NSERC Discovery
  Grant program.  E.~Rosta is a graduate of the undergraduate honours
  program in mathematics at Cape Breton University; her work was
  supported through the NSERC USRA program.}
\date{December 28, 2017}
\keywords{degenerate Sobolev spaces, Sobolev inequality,
  $p$-Laplacian}
\subjclass{35B65,35J70,42B35,42B37,46E35}
\begin{document}

\begin{abstract} 
  We prove that  a local, weak Sobolev inequality implies a global
  Sobolev estimate using
  existence and regularity results for a family of
  $p$-Laplacian equations.   Given $\Omega\subset\mathbb{R}^n$, let $\rho$ be a
  quasi-metric on $\Omega$, and let $Q$ be an $n\times n$ semi-definite matrix function
  defined on $\Omega$.   
For an open set $\Theta\Subset\Omega$, we give sufficient conditions
to show that if  the local
weak   Sobolev inequality 
\[  \Big(\fint_B
  |f|^{p\sigma}dx\Big)^\frac{1}{p\sigma} \leq C\Big[ r(B)\fint_B
  |\sqrt{Q}\nabla f|^pdx + \fint_B
  |f|^pdx\Big]^\frac{1}{p} \]
holds for some $\sigma>1$, all balls $B\subset \Theta$,  and functions $f\in
Lip_0(\Theta)$, then the global Sobolev inequality
\[  \Big(\int_\Theta
  |f|^{p\sigma}dx\Big)^\frac{1}{p\sigma} \leq C\Big( \int_\Theta
  |\sqrt{Q}\nabla f(x)|^pdx\Big)^\frac{1}{p} \]
also holds.   Central to our proof is showing the existence and
boundedness of solutions of the Dirichlet problem 
\[
\begin{cases}
\mx_{p,\tau} u &  = \varphi  \text{ in } \Theta \\
u  & = 0  \text{ in } \partial \Theta,
\end{cases}
\]
where $\mx_{p,\tau}$ is a degenerate $p$-Laplacian operator with a
zero order term:
\[
\mx_{p,\tau} u 
= \text{div}\Big(\big|\sqrt{Q} \nabla u\big|^{p-2}Q\nabla u\Big)
- \tau |u|^{p-2}u.
\]
\end{abstract}

\maketitle


\section{Introduction}

Given an open set $\Theta \subset \R^n$, the classical Sobolev inequality,
\[ \bigg(\int_\Theta
|f|^{p\sigma}dx\bigg)^\frac{1}{p\sigma} \leq C\bigg( \int_\Theta
|\nabla f|^pdx\bigg)^\frac{1}{p}, \]
holds for $1\leq p<n$, $\sigma=\frac{n}{n-p}>1$, and all functions $f\in Lip_0(\Theta)$
(that is, Lipschitz function such that $\supp(f) \Subset \Omega$).
For this result and extensive generalizations, see \cite{E,GT,HK}.

We are interested in determining sufficient conditions for a degenerate
Sobolev inequality,
\begin{equation} \label{eqn:global-sobolev}
 \bigg(\int_\Theta
|f|^{p\sigma}\,dx\bigg)^\frac{1}{p\sigma} \leq C\bigg( \int_\Theta
|\sqrt{Q}\nabla f|^p\,dx\bigg)^\frac{1}{p}, 
\end{equation}
to hold, where $1<p<\infty$, $\sigma>1$, and $Q$ is an $n\times n$
matrix of measurable functions defined on $\Theta$ such that for
almost every $x\in \Theta$, $Q(x)$ is semi-definite .  Such
inequalities arise naturally in the study of degenerate elliptic PDEs:
a global Sobolev inequality is necessary to prove the existence of
weak solutions (see, for instance~\cite{CMN,MR}) and to prove compact
embeddings of (degenerate) Sobolev spaces (see \cite{CRW}).

Our goal is to show that such global estimates can be derived from 
weaker, local Sobolev inequalities.  

\begin{defn}\label{sobprop} 
  Given $1\leq p<\infty$ and $\sigma>1$, a local Sobolev
  property of order $p$ with gain $\sigma$ holds in $\Omega$ if there
  is a constant $C_0>0$ and a positive, continuous function $r_1 : \Omega
  \rightarrow (0,\infty)$ such that for any $y\in \Omega$,
  $0<r<r_1(y)$, and $f\in Lip_0(B(y,r))$,
\begin{equation}\label{localsob} 
\bigg(\fint_{B(y,r)}
  |f|^{p\sigma}\,dx\bigg)^\frac{1}{p\sigma} 
\leq
  C_0r\bigg(\fint_{B(y,r)} |\sqrt{Q}\nabla
  f|^p\,dx\bigg)^\frac{1}{p} 
+ 
C_0\bigg(\fint_{B(y,r)}
  |f|^p\,dx\bigg)^\frac{1}{p}.
\end{equation}
\end{defn}

Local Sobolev estimates arise naturally in the study of regularity of
degenerate elliptic equations (see~\cite{SW1,SW2}), but they are not
sufficient for proving the existence of solutions.  So it is natural
to ask if local inequalities imply global ones.   The obvious
approach is to use a partition of unity argument, but this does not
work. 
If the local Sobolev property of order $p$ with gain $\sigma$ holds on
$\Omega$, then given any open set $\Theta\Subset \Omega$ a partition of unity argument shows that there is a constant $C(\Theta)$ such that 
\begin{equation} \label{weakglobalsob} 
\bigg(\int_\Theta |f|^{p\sigma}\,dx\bigg)^\frac{1}{p\sigma} 
\leq
 C(\Theta)\bigg[\bigg(\int_\Theta |\sqrt{Q}\nabla f|^p\,dx\bigg)^\frac{1}{p} 
+ 
\bigg(\int_\Theta |f|^pdx\bigg)^\frac{1}{p}\bigg]
\end{equation}
holds for every $f\in Lip_0(\Theta)$.  However, we cannot remove the
second term on the right of \eqref{weakglobalsob} even when the second
term on the right of \eqref{localsob} is not present.

Nevertheless, with some additional assumptions we are able to pass
from a local to a global Sobolev inequality.  To set the stage for our
main result, we first state an important special case.

\begin{thm} \label{thm:main-special}
Fix $1<p<\infty$ and let $\Omega \subset \mathbb R^n$ be an open set.
Suppose that $Q$ is a semi-definite matrix function in
$L^\infty(\Omega)$.
Suppose that the local Sobolev property of order $p$ with gain
$\sigma>1$~\eqref{localsob} holds, and suppose further that a local
Poincar\'e inequality
\[ \bigg(\fint_{B(y,r)} |f-f_{B(y,r)}|^p\,dx\bigg)^{\frac{1}{p}}
\leq Cr \bigg(\fint_{B(y,\beta r)} |\sqrt{Q}\nabla
f|^p\,dx\bigg)^{\frac{1}{p}} \]
holds for $y\in \Omega$, $\beta\geq 1$, $0<\beta r<r_1(y)$, and $f\in
Lip_0(\Omega)$.  Then given any open set $\Theta \Subset \Omega$,
there exists a constant $C(\Theta)$ such that
\eqref{eqn:global-sobolev} holds.
\end{thm}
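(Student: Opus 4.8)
The plan is to begin with the ``weak'' global estimate~\eqref{weakglobalsob} --- which holds on any $\Theta\Subset\Omega$ by the partition of unity argument already noted --- and then to remove the lower order term $\|f\|_{L^p(\Theta)}$ from its right-hand side. The device that makes this possible is a Poincar\'e-type inequality valid for \emph{every} $f\in Lip_0(\Theta)$, and it is at this point that the degenerate $p$-Laplacian is used. We may assume throughout that $f\ge 0$: replacing $f$ by $|f|$ leaves $\|f\|_{L^s(\Theta)}$ unchanged for each $s$ and leaves $|\sqrt Q\nabla f|$ unchanged almost everywhere, while neither~\eqref{eqn:global-sobolev} nor~\eqref{weakglobalsob} distinguishes $f$ from $|f|$.

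The key auxiliary object is a ``torsion function'' for $\Theta$: a weak solution $w$ of $\di\big(|\sqrt Q\nabla w|^{p-2}Q\nabla w\big)=-1$ in $\Theta$, $w=0$ on $\partial\Theta$, with $w\ge 0$, $w\in L^\infty(\Theta)$, and $\sqrt Q\nabla w\in L^p(\Theta)$, satisfying $\int_\Theta|\sqrt Q\nabla w|^p\,dx=\int_\Theta w\,dx<\infty$ (test the equation against $w$). This is exactly where the family $\mx_{p,\tau}$ intervenes. One cannot attack the $\tau=0$ problem directly, since coercivity of the natural energy $v\mapsto \tfrac1p\int_\Theta|\sqrt Q\nabla v|^p\,dx-\int_\Theta v\,dx$ over $(W^{1,p}_Q)_0(\Theta)$ would already presuppose the Poincar\'e inequality we are after. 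Instead, for each $\tau>0$ the regularized problem $\mx_{p,\tau}w_\tau=-1$ is solvable by the direct method --- the zero order term $\tau|w_\tau|^{p-2}w_\tau$ supplies the missing coercivity --- and the comparison principle gives $w_\tau\ge 0$. A Moser-type iteration built on the local Sobolev inequality~\eqref{localsob} and the local Poincar\'e inequality then produces a bound $\|w_\tau\|_{L^\infty(\Theta)}\le C$ with $C$ independent of $\tau\in(0,1]$; consequently $\|\sqrt Q\nabla w_\tau\|_{L^p(\Theta)}^p=\int_\Theta w_\tau\,dx\le C|\Theta|$ uniformly in $\tau$, which is enough to extract a weak limit $w$ solving the $\tau=0$ equation as $\tau\to 0^+$.

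With $w$ in hand, testing its equation against $f\in Lip_0(\Theta)$, $f\ge 0$, and applying H\"older's inequality gives
\[
\int_\Theta f\,dx=\int_\Theta |\sqrt Q\nabla w|^{p-2}(\sqrt Q\nabla w)\cdot(\sqrt Q\nabla f)\,dx\le\Big(\int_\Theta|\sqrt Q\nabla w|^p\,dx\Big)^{\frac{p-1}{p}}\Big(\int_\Theta|\sqrt Q\nabla f|^p\,dx\Big)^{\frac1p},
\]
that is, $\|f\|_{L^1(\Theta)}\le C_1(\Theta)\,\|\sqrt Q\nabla f\|_{L^p(\Theta)}$ with $C_1(\Theta)=\big(\int_\Theta w\,dx\big)^{(p-1)/p}$. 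Interpolating $L^p$ between $L^1$ and $L^{p\sigma}$ yields $\|f\|_{L^p(\Theta)}\le\|f\|_{L^1(\Theta)}^{1-\theta}\|f\|_{L^{p\sigma}(\Theta)}^{\theta}$ with $\theta=\tfrac{(p-1)\sigma}{p\sigma-1}$, and the hypothesis $\sigma>1$ is exactly what makes $\theta<1$. Substituting both the $L^1$ Poincar\'e inequality and this interpolation into~\eqref{weakglobalsob}, and using Young's inequality to absorb the factor $\|f\|_{L^{p\sigma}(\Theta)}^{\theta}$ into the left-hand side --- legitimate since $\|f\|_{L^{p\sigma}(\Theta)}<\infty$ for $f\in Lip_0(\Theta)$ --- yields~\eqref{eqn:global-sobolev}.

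The main obstacle is the second step: proving existence of the regularized solutions $w_\tau$ and, above all, an $L^\infty$ bound for them \emph{uniform in} $\tau$, having at one's disposal only the \emph{local} Sobolev inequality~\eqref{localsob}, which itself carries a lower order term, together with the local Poincar\'e inequality. Controlling the constants through the iteration, and through the passage $\tau\to0^+$, so that neither degenerates, is the technical core of the argument.
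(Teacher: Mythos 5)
Your proposal is correct, and it is a genuinely different route from the paper's. The paper proves Theorem~\ref{thm:main-special} as an immediate corollary of Theorem~\ref{mainlocglob}, whose proof goes through duality at the top: it writes $\|f\|_{L^{p\eta}(\Theta)}^{p}$ as a supremum of $\int_\Theta \varphi |f|^p\,dx$ over nonnegative $\varphi\in L^q$ with unit norm, solves $\mx_{p,\tau}u_\tau=\varphi$ for each such $\varphi$, tests the equation against $|f|^p$ (a product rule, Lemma~\ref{prodlem}, is needed to justify this), and closes via Young's inequality and the uniform $L^\infty$ bound (Proposition~\ref{tauindep}). You instead solve a single auxiliary problem, the ``torsion'' problem with fixed right-hand side $\varphi=-1$, test it against $f$ to obtain a global $L^1$--Poincar\'e inequality $\|f\|_{L^1(\Theta)}\le C\|\sqrt Q\nabla f\|_{L^p(\Theta)}$, and then remove the lower-order term in \eqref{weakglobalsob} by interpolating $L^p$ between $L^1$ and $L^{p\sigma}$ and absorbing. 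Both routes lean on the same hard ingredient --- the $\tau$-uniform $L^\infty$ estimate --- but yours replaces the paper's family of auxiliary problems indexed by $\varphi$, the test function $|f|^p$, and the duality step with a single fixed problem, the test function $f$ itself, and a two-line interpolation; for the bounded-$Q$, Euclidean-metric special case this is cleaner and more transparent, and it additionally isolates a global $L^1$-Poincar\'e inequality as a byproduct.

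Two small points you should tighten, neither of which is fatal. First, the assertion that a comparison principle gives $w_\tau\ge 0$ is not established anywhere in the paper's framework and is not needed: testing the weak equation against $w_\tau$ gives $\int_\Theta|\sqrt Q\nabla w_\tau|^p\,dx + \tau\int_\Theta|w_\tau|^p\,dx=\int_\Theta w_\tau\,dx$, so $\int_\Theta w_\tau\,dx\ge 0$ automatically and $\|\sqrt Q\nabla w_\tau\|_{L^p}^p\le \|w_\tau\|_\infty|\Theta|$ without any sign information. Second, the passage to a limiting solution as $\tau\to 0^+$ requires identifying the weak-$*$ limit of the nonlinear flux $|\sqrt Q\nabla w_{\tau_k}|^{p-2}\sqrt Q\nabla w_{\tau_k}$ in $L^{p'}$, which needs a Minty--Browder monotonicity argument you have not supplied. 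The simplest fix is to avoid the limit entirely: keep $\tau>0$, test the $w_\tau$-equation against $f\ge 0$, write $\int_\Theta f\,dx \le \|\sqrt Q\nabla w_\tau\|_{L^p}^{p-1}\|\sqrt Q\nabla f\|_{L^p} + \tau K^{p-1}\|f\|_{L^1(\Theta)}$ with $K=\sup_{\tau\in(0,1]}\|w_\tau\|_{L^\infty}$, and choose $\tau$ so small that $\tau K^{p-1}<\tfrac12$; the $L^1$-Poincar\'e follows on rearranging, with a constant depending only on $K$ and $|\Theta|$. This is in the same spirit as the paper's final choice of $\tau$ small in \eqref{finalgrail}.
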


\medskip

Our main result, Theorem~\ref{mainlocglob}, generalizes
Theorem~\ref{thm:main-special} in several ways.  First, we remove the
assumption that $Q$ is bounded, allowing it to be singular as well as
degenerate.  Second, we can change the underlying geometry by
replacing the Euclidean metric with a quasi-metric, and defining balls
with respect to this metric.   The statement is rather technical and
requires some additional hypotheses, which is why we have deferred the
statement until below.

The remainder of the paper is organized as follows.  In
Section~\ref{section:main} we give the necessary assumptions and
definitions and then state Theorem~\ref{mainlocglob}.   In
Section~\ref{section:example} we give an application of
Theorem~\ref{mainlocglob} to a family of Lipschitz vector fields.
Such vector fields are a natural example of where degenerate
$p$-Laplacians arise.  

A central and somewhat surprising
part of our proof of Theorem~\ref{mainlocglob} is to prove the existence and
boundedness of solutions of the Dirichlet problem
\begin{equation*} 
\begin{cases}
\mx_{p,\tau} u &  = \varphi  \text{ in } \Theta \\
u  & = 0  \text{ in } \partial \Theta,
\end{cases}
\end{equation*}
where $\mx_{p,\tau}$ is a degenerate $p$-Laplacian operator with a
zero order term:
\begin{equation*} 
\mx_{p,\tau} u 
= \text{div}\Big(\big|\sqrt{Q} \nabla u\big|^{p-2}Q\nabla u\Big)
- \tau |u|^{p-2}u.
\end{equation*}
We prove the existence of solutions using Minty's theorem in
Section~\ref{section:weak} and we prove boundedness using ideas
from~\cite{CRW,MRW} in
Section~\ref{section:bdd-solns}.   Finally, in Section~\ref{section:main-proof}
we prove Theorem~\ref{mainlocglob}.  

Throughout this paper, $\Omega$ will be a fixed open, connected subset of
$\R^n$.  We say an open set $\Theta$ is compactly contained in $\Omega$ and
write $\Theta \Subset \Omega$ if $\Theta$ is bounded and
$\bar{\Theta}\subset \Omega$.  The set $Lip_0(\Omega)$ consists of all
Lipschitz functions $f$ such that $\supp(f) \Subset \Omega$.   A
constant $C$ may vary from line to line; if necessary we will denote
the dependence of the constant on various parameters by writing, for
instance, $C(p)$.  

\section{The main result}
\label{section:main}

In order to state Theorem~\ref{mainlocglob} we need to make some
technical assumptions and give some additional definitions.  We begin
with the topological framework.  Fix $\Omega\subset \mathbb{R}^n$ and  let
$\rho:\Omega\times\Omega\ra \mathbb{R}$ be a symmetric quasimetric on
$\Omega$: that is, there is a constant $\kappa\geq 1$ so that for all
$x,y,z\in \Omega$:
\begin{enumerate}
\item $\rho(x,y)=0$ if and only if  $x=y$;

\item $\rho(x,y)=\rho(y,x)$;

\item $\rho(x,y) \leq \kappa\big(\rho(x,z)+\rho(x,y)\big)$.

\end{enumerate}
Given $x\in \Omega$ and $r>0$ we will always denote the $\rho$-ball of
radius $r$ centered at $x$ by $B(x,r)$; that is
$B(x,r)=\{y\in \Omega : \rho(x,y)<r\}$.  We will assume that the balls
$B(x,r)$ are Lebesgue measurable.  We will also use $D(x,r)$
to denote the corresponding Euclidean ball $\{ x\in \Omega : |x-y|<r
\}$.  We will always assume that the quasi-metric $\rho$ and the
Euclidean distance satisfy the following:  
\begin{equation} \label{eqn:cont-metric}
\text{ given }x,\,y\in \Omega,\,
|x-y|\rightarrow 0 \text{ if and only if }\rho(x,y)\rightarrow 0.
\end{equation}
Equivalently, we may assume that given $x\in \Omega$ and any
$\epsilon>0$, there exists $\delta>0$ such that $D(x,\delta)\subset
B(x,\epsilon)$, and that given any $\delta>0$ there exists $\gamma>0$
such that $B(x,\gamma)\subset D(x,\delta)$.   Either of these
conditions hold if we assume that the topology generated by the balls
$B(x,r)$ is equivalent to the Euclidean topology on $\Omega$. 

\begin{rem}
This assumption on the topology of $(\Omega,\rho)$ is taken
from~\cite{MRW}; it is closely related to a condition first assumed by
C.~Fefferman and Phong~\cite{FP} (see also~\cite{SW1}).  
\end{rem}

\medskip

Let $\Sn_{n}$ denote the
collection of all positive, semi-definite $n\times n$ self-adjoint
matrices;   fix a function $Q:\Omega \rightarrow S_{n}$ whose entries
are Lebesgue measurable.  For  a.e.~$x\in
\Omega$ and for all $\xi,\eta\in\mathbb{R}^n$ (where $\xi'$ denotes
the transpose of $\xi$) define the associated quadratic form,
$\qform(x,\xi) = {\lan}Q(x)\xi,\xi{\ran}=\xi'Q(x)\xi$ and inner
product $\lan Q(x)\xi,\eta \ran =\eta'Q(x)\xi$. 
These satisfy
\[ 0\leq \langle Q(x)\xi,\xi \rangle, \qquad
|{\lan}Q(x)\xi,\eta{\ran}|
\leq {\lan}Q(x)\eta,\eta{\ran}^\frac{1}{2}{\lan}Q(x)\xi,\xi{\ran}^\frac{1}{2}.
\]

We define the  operator norm of $Q(x)$ by
\begin{equation*}
\left|Q(x)\right|_\text{op}
=\displaystyle\sup_{|\xi|=1} |Q(x)\xi|.
\end{equation*}
Since $Q$ is semi-definite a.e., $\sqrt{Q}$ is well defined, and
$|\sqrt{Q(x)}|_\text{op} = |Q(x)| _\text{op}^{\frac{1}{2}}$.  We will
write $Q \in L^p(\Omega)$, $1\leq p \leq \infty$, if
$|Q|_\text{op} \in L^p(\Omega)$.

\medskip

Besides the local Sobolev inequality given in
Definition~\ref{localsob}, and which we will hereafter assume holds for
$\rho$-balls $B$, we will also need a local Poincar\'e property.  In
stating it, we assume that  $r_1 : \Omega \rightarrow (0,\infty)$
is the same function that appears in Definition~\ref{localsob}.

\begin{defn}\label{pcprop} 
  Given $1\leq p < \infty$ and $t' \geq 1$, a local Poincar\'e property of order
  $p$ with gain $t'\geq 1$ holds in $\Omega$ if there are
  constants $C_1>0$ and $\beta\geq 1$ such that for any $y\in \Omega$ and
  $r$ such that $0<\beta r<r_1(y)$,
\begin{equation}\label{localpc} 
\bigg(\fint_{B(y,r)}  |f-f_{B(y,r)}|^{pt'}\,dx\bigg)^\frac{1}{pt'}
\leq
  C_1r\bigg(\fint_{B(y,\beta r)}|\sqrt{Q}\nabla
  f|^p\,dx\bigg)^\frac{1}{p}
\end{equation}
holds for every
  $f\in Lip(B(y,\beta r))$ such that $\sqrt{Q}\nabla f\in
  L^p(B(y,\beta r))$, and where $f_{B(y,r)}=\fint_{B(y,r)} f(x)\,dx$.
\end{defn}

\begin{rem} 
  Inequality \eqref{localpc} will be used to establish that the
  embedding of the degenerate Sobolev space $\widehat{H}^{1,p}_{Q,0}$ (that we will
  define below) into  $L^p$ is compact.  We will also use it to prove
  a product rule for functions in this Sobolev space.  The parameter
  $t'$ is determined by the regularity of $Q$:  
  the more regular $Q$ is, the smaller $t'$ is permitted to be.  In
  fact, if $Q$ is locally bounded in $\Omega$, \eqref{localpc} is not
  required to establish the product rule: see the proof of Lemma \ref{prodlem}.
\end{rem} 

\begin{rem}
The problem of determining sufficient conditions on the matrix $Q$ for the
Poincar\'e inequality~\eqref{localpc} to hold has been considered in a
somewhat different form in~~\cite{CIM,CRR}.  It is interesting to note
that in this case the condition involves the solution of a Neumann
problem for a degenerate $p$-Laplacian operator.
\end{rem}


Our final definition is a technical assumption on the geometry of
$(\Omega,\rho)$.  This
condition, which we refer to as the ``cutoff" condition, ensures the
existence of accumulating sequences of Lipschitz cutoff
functions on $\rho$-balls.  Again, the function $r_1$ is assumed to be
the same as in Definition~\eqref{localsob}.  

\begin{defn}\label{aslcof} 
Given $(\Omega,\rho)$ and a matrix $Q$, a cutoff condition of order $1\leq s\leq
  \infty$ holds if there exist constants
  $C_3,\,N >0$ and $0<\alpha<1$ such that given $x\in \Omega$ and $0<r<r_1(x)$ there
 exists a sequence $\{\psi_j\}_{j=1}^\infty\subset Lip_0(B(x,r))$
such that for all $j \in \mathbb{N}$,
\begin{equation}\label{cutoff}
\begin{cases} 0\leq \psi_j \leq 1,\\
\emph{supp } \psi_1 \subset B(x,r),\\
B(x,\alpha r) \subset \{y\in B(x,r) : \psi_j(y)=1\},\\
\emph{supp } \psi_{j+1} \subset \{y\in B(x,r) :\psi_j(y)=1\}, \\
\bigg(\disp\fint_{B(x,r)} |\sqrt{Q(y)}\nabla \psi_j(y)|^s
dy\bigg)^\frac{1}{s} \leq C_3 \frac{N^j}{r}. 
\end{cases}
\end{equation}
\end{defn}

Definition~\ref{aslcof} first appeared in~\cite{SW1}, though it is a
generalization of a concept that has appeared previously in the
literature; see~\cite{SW1} for further references.  If $\rho$ is the
Euclidean metric and $Q$ is bounded, then this sequence of cutoff
functions can be taken to be the standard Lipschitz cutoff functions.  More
generally, it was shown in~\cite{SW1} that with our assumptions on
$\rho$, if $Q$ is continuous, then such a sequence exists with
$s=\infty$.   This cutoff condition holds for a wide variety of
geometries, including $\rho$ which produce highly degenerate balls.
See, for example,~\cite{Mac}.

\begin{rem}\label{kmr}
  There is a close connection between the cutoff condition and
  doubling.  In~\cite{KMR} it was shown that if  the local Sobolev
  property of order $p$ and gain $\sigma$ and the cutoff condition~\eqref{cutoff} both
  hold, then  Lebesgue measure is locally doubling for the collection of
  $\rho$-balls $\{B(x,r)\}_{x\in\Omega; r>0}$.  That is, there exists a
  positive constant $C$ so that given any $x\in\Omega$ and
  $0<r<r_1(x)$ then $\big|B(x,2r)\big| \leq C|B(x,r)|$.  Consequently, for
  any $0<r\leq s<r_1(x)$, 
 \begin{equation} \label{eqn:kmr-doubling}
 \big| B(x,s) \big| \leq \tilde{C} \big(\frac{s}{r}\big)^{d_0}
  \big|B(x,r)|, 
\end{equation}
where $d_0 = \log_2(C)$.  We will use this fact to
  prove Proposition~\ref{CRWpro} below.
\end{rem} 

\medskip

We can now state our main result.

\begin{thm}\label{mainlocglob} 
  Given a set $\Omega \subset \mathbb R^n$, let $\rho$ be a quasi-metric on
$\Omega$.    Fix $1<p<\infty$ and $1<t\leq\infty$, and suppose $Q$ is
a semi-definite matrix function such that 
  $Q\in L^{\frac{pt}{2}}_{\emph{loc}}(\Omega)$.  Suppose further that 
  that the cutoff condition of order $s>p\sigma'$, the local
  Poincar\'e property of order $p$ with gain $t'=\frac{s}{s-p}$, and
  the local Sobolev property of order $p$ with gain 
  $\sigma\geq 1$ hold. Then, given any open
  set $\Theta\Subset \Omega$
  there is a positive constant $C(\Theta)$ such that the global
  Sobolev inequality 
\begin{equation}\label{sobgoal} 
\bigg(\int_\Theta |f|^{p\sigma}dx\bigg)^\frac{1}{p\sigma} 
\leq C(\Theta)\bigg(\int_\Theta
  |\sqrt{Q}\nabla f|^pdx\bigg)^\frac{1}{p} 
\end{equation}
holds for all
  $f\in Lip_0(\Theta)$.
\end{thm}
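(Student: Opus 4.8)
The plan is to follow the strategy announced in the introduction: deduce the global Sobolev inequality by solving the Dirichlet problem for the degenerate $p$-Laplacian with zero order term $\mx_{p,\tau}$ and letting $\tau\to\infty$. The obstacle to passing from the weak global inequality~\eqref{weakglobalsob} to~\eqref{sobgoal} is precisely the lower order term $\big(\int_\Theta|f|^p\big)^{1/p}$, and the idea is that solving $\mx_{p,\tau}u=\varphi$ on $\Theta$ with zero boundary data produces, via the natural energy identity, control of this term by the gradient term with a constant that improves as $\tau$ grows.

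First I would fix the functional-analytic setting: define the degenerate Sobolev space $\widehat H^{1,p}_{Q,0}(\Theta)$ (the completion of $Lip_0(\Theta)$ under the norm $\|\sqrt Q\nabla f\|_p+\|f\|_p$), and use the hypotheses — local Sobolev, local Poincar\'e with gain $t'=\tfrac{s}{s-p}$, the cutoff condition of order $s>p\sigma'$, and $Q\in L^{pt/2}_{\mathrm{loc}}$ — to establish the auxiliary facts promised in the remarks: the doubling estimate~\eqref{eqn:kmr-doubling} from Remark~\ref{kmr}, a Sobolev-type embedding (Proposition~\ref{CRWpro}), a product rule in $\widehat H^{1,p}_{Q,0}$ (Lemma~\ref{prodlem}), and, crucially, compactness of the embedding $\widehat H^{1,p}_{Q,0}(\Theta)\hookrightarrow L^p(\Theta)$. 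Next, for each $\tau>0$ I would show $\mx_{p,\tau}:\widehat H^{1,p}_{Q,0}(\Theta)\to(\widehat H^{1,p}_{Q,0}(\Theta))^*$ is monotone, bounded, coercive and hemicontinuous, so Minty's theorem gives a unique weak solution $u_\varphi$ of the Dirichlet problem for every $\varphi$ in the dual; the boundedness of $u_\varphi$ when $\varphi$ is bounded follows from the De Giorgi–Moser-type iteration in~\cite{CRW,MRW}, using the cutoff functions from Definition~\ref{aslcof}.

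Then comes the main step. Given $f\in Lip_0(\Theta)$, I would test the equation against a suitable function: taking $\varphi$ so that $u_\varphi$ relates to $f$, or more directly, using $f$ itself as a test function in the weak formulation of $\mx_{p,\tau}f = \mathrm{div}(|\sqrt Q\nabla f|^{p-2}Q\nabla f)-\tau|f|^{p-2}f$. This yields the identity
\[
\int_\Theta |\sqrt Q\nabla f|^p\,dx + \tau\int_\Theta |f|^p\,dx
= -\int_\Theta (\mx_{p,\tau}f)\, f\,dx,
\]
and then one estimates the right-hand side: since $\mx_{p,\tau}f$ lies in the dual, Proposition~\ref{CRWpro} (the Sobolev embedding into $L^{p\sigma}$ with the lower-order term, i.e. \eqref{weakglobalsob}) bounds $\|f\|_{p\sigma}$. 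The role of the boundedness result for $\mx_{p,\tau}$-solutions is to give, for $\varphi\in L^\infty$, a bound on $\|u_\varphi\|_\infty$ uniform in $\tau$ after the correct normalization, which lets one absorb the $L^p$ term. Concretely, one shows that for each $\varepsilon>0$ there is $\tau=\tau(\varepsilon)$ with $\|f\|_{L^p(\Theta)}\le \varepsilon\|\sqrt Q\nabla f\|_{L^p(\Theta)}$ for all $f$ in a dense class, combining the weak global estimate with the $\tau$-dependent coercivity; feeding this back into~\eqref{weakglobalsob} with $\varepsilon$ chosen to absorb $C(\Theta)\|f\|_p$ into the left side yields~\eqref{sobgoal}.

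The hardest part, I expect, is not the abstract PDE existence (Minty is routine once monotonicity and coercivity are checked) but rather the boundedness of solutions of the degenerate problem and its uniformity in $\tau$: the Moser iteration must be run carefully with the accumulating cutoff sequence $\{\psi_j\}$ from Definition~\ref{aslcof}, tracking the geometric constant $N^j$ and the doubling exponent $d_0$, and the condition $s>p\sigma'$ is exactly what makes the iteration converge. A secondary technical difficulty is the product rule and the compact embedding $\widehat H^{1,p}_{Q,0}\hookrightarrow L^p$ — needed so that the operator $\mx_{p,\tau}$ is well-behaved and so that the limiting/absorption argument is legitimate — where the unboundedness of $Q$ forces one to use the Poincar\'e inequality with the sharp gain $t'=\tfrac{s}{s-p}$ together with Hölder's inequality against $Q\in L^{pt/2}_{\mathrm{loc}}$.
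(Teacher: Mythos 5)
Your proposal correctly identifies the overall machinery the paper uses (Minty's theorem for existence, an $L^\infty$ bound on solutions uniform in the zero-order parameter, the product rule in $\widehat H^{1,p}_{Q,0}$, and the compact embedding from Proposition~\ref{CRWpro}), but the central step — how the PDE actually produces the global Sobolev inequality — is wrong, in two ways.

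First, the direction of $\tau$ is reversed. The paper takes $\tau\in(0,1)$ and ultimately sends $\tau\to 0$: the $\tau$-term is a small regularization that makes $\ma_{p,\tau}$ coercive so that Minty applies, and the whole point of Proposition~\ref{tauindep} is that the $L^\infty$ bound on $u_\tau$ is uniform as $\tau\downarrow 0$. Your plan of "letting $\tau\to\infty$" with a "constant that improves as $\tau$ grows" is the opposite of what happens, and it conflicts with the coercivity estimate (the lower bound in Lemma~\ref{complemma} uses $\eta=\min\{1,\tau\}$, so large $\tau$ buys nothing).

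Second, the identity you propose to exploit is a tautology. If you expand
\[
-\int_\Theta (\mx_{p,\tau}f)\,f\,dx
= \int_\Theta |\sqrt Q\nabla f|^p\,dx + \tau\int_\Theta |f|^p\,dx,
\]
then both sides are literally the same expression; the zero-order term $\tau|f|^{p-2}f$ reappears on the right after integration by parts, so nothing is gained. There is no way to estimate this to conclude $\|f\|_{L^p}\le\varepsilon\|\sqrt Q\nabla f\|_{L^p}$ — indeed that inequality cannot hold for all $f$ (and all $\varepsilon>0$), since letting $\varepsilon\to 0$ would force $f\equiv0$. What the paper actually does is a duality argument: it fixes $v\in Lip_0(\Theta)$, writes $\|v\|_{L^{p\eta}}^p$ as a supremum of $\int_\Theta\varphi|v|^p\,dx$ over normalized $\varphi\in L^q(\Theta)$ with $q=\eta'\in(p\sigma',\infty)\cap[p',\infty)$, and for each such $\varphi$ solves $\mx_{p,\tau}u_\tau=\varphi$. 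The test functions are $|v|^p$ and then (via the product rule, Lemma~\ref{prodlem}) $u_\tau|v|^p$; Young's inequality absorbs the term ${\bf A}=\int\lan Q\vec g_\tau,\vec g_\tau\ran^{p/2}|v|^p$, the uniform bound $\|u_\tau\|_\infty\le K$ from Proposition~\ref{tauindep} controls the remaining terms, and choosing $\tau$ small absorbs the residual $\tau\|v\|_{L^p}^p$ term after a second duality step. This duality-and-test-function scheme is the missing idea; without it your outline does not close.
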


\begin{rem} 
  If $Q\in L^\infty_{\emph{loc}}(\Omega)$, then we can take $t=\infty$ and
  $t'=1$, so that $s=\infty$.  Thus, we only need to assume a local
  Poincar\'e inequality of order $p$ without gain.  If we assume that
  $\rho$ is the Euclidean metric, then as we noted above, the cutoff
  condition holds with $s=\infty$.  Thus, Theorem~\ref{thm:main-special} follows
  immediately from Theorem~\ref{mainlocglob}.
\end{rem}

\section{Example: diagonal Lipschitz vector fields}
\label{section:example}

In this section we give an illustrative example of the application of
Theorem~\ref{mainlocglob}.  
Let $\Omega$ be any bounded domain in $\mathbb{R}^n$ and let
$1<p<\infty$.   Fix a vector function $a=(a_1,\ldots,a_n)$ where $a_1=1$
and $a_2,\ldots,a_n:\Omega\rightarrow\mathbb{R}$ are such that 
for $2\leq j \leq n$, $a_j$ is bounded, nonnegative and Lipschitz
continuous.  Further, assume that the $a_j$  satisfy the $RH_\infty$
condition in the first variable $x_1$ uniformly in $x_2,\ldots,x_n$:  
there exists a constant $C$ for each interval $I$ and
$x=(x_1,\ldots,x_n)\in \Omega$,
\[  a_j(x_1,\ldots,x_n) \leq C\fint_I a(z_1,x_2,\ldots,x_n)dz_1. \]
For instance, we can take $a_j(x_1,\ldots,x_n)=
|x_1|^{\alpha_j}b_j(x_2,\ldots,x_n)$, where $\alpha_j \geq 0$ and $b_j$
is a non-negative Lipschitz function.  (For more on the $RH_\infty$ condition, see \cite{CN}.)

Now let $X_j=a_j\frac{\partial}{\partial x_j}$ and $\nabla_a =
(X_1,...,X_n)$,  and  define the associated $p$-Laplacian
\begin{equation}\label{alap}
L_{p,a}u=\text{div}_a\left(\left|\nabla_a u\right|^{p-2}\nabla_au\right).
\end{equation}
If we let $Q(x) = \text{diag}(1,a_2^2,...,a_n^2)$, then we have that
$L_{p,a} = \text{div}\big(|\sqrt{Q}\nabla u|^{p-2}Q\nabla
  u\big)$.  It is shown in \cite{MRW2} that each of Definitions
\ref{sobprop}, \ref{pcprop}, and \ref{aslcof} hold with respect the
family of non-interference balls $A(x,r)$ defined as in \cite{SW1},
and there exists a quasi-metric $\rho$ such that the non-interference
balls are equivalent to the $\rho$-balls.  In fact, setting
$B(x,r)=A(x,r)$ and $r_1(x)=\delta'\text{dist}(x,\partial\Omega)$ for
$\delta'>0$ sufficiently small depending on $\|Q\|_{\infty}$,
condition \eqref{cutoff} holds with $s=\infty$, \eqref{localpc} holds
with $t'=1$ and \eqref{localsob} holds with $\sigma=\frac{d_0}{d_0-p}$
where $d_0$ is the doubling exponent associated to Lebesgue measure
and the collection of balls $A(x,r)$, as in \eqref{eqn:kmr-doubling}.
As a result, both \cite[(1.15) and (1.16)]{MRW} hold with $t=\infty$
and $t'=1$.  Therefore, we can apply Theorem~\ref{mainlocglob} to get
that for any open subdomain $\Theta\Subset\Omega$, there exists a constant
$C(\Theta)$ such that the Sobolev inequality
\begin{equation}\label{appsob}
\left(\int_\Theta \left|f\right|^{p\sigma}\,dx\right)^\frac{1}{p\sigma} 
\leq C(\Theta)  \left(\int_\Theta \left|\sqrt{Q}\nabla f\right|^p\,dx\right)^\frac{1}{p}
\end{equation}
holds for all $f\in Lip_0(\Theta)$.  Moreover, by the doubling
property~\eqref{eqn:kmr-doubling}, if we let $\Theta = A(x,r)$ for
$0<r<\delta\text{dist}(x,\partial\Omega)$, then we get
\[
\left(\fint_{A(x,r)}
  \left|f\right|^{p\sigma}\,dx\right)^\frac{1}{p\sigma} 
\leq Cr  \left(\fint_{A(x,r)} \left|\sqrt{Q}\nabla f\right|^p\,dx\right)^\frac{1}{p}
\]
for any $f\in Lip_0(A(x,r))$.

As a consequence, when  $p=2$ inequality \eqref{appsob} is suffiencent to
use \cite[Theorem 3.10]{R} to prove the existence of a unique weak
solution of the linear Dirichlet problem
\begin{equation*}
\begin{cases}
\text{div}\left(Q\nabla u\right) & = f\text{ in }\Theta \\
u &= 0\text{ on }\partial\Theta.
\end{cases}
\end{equation*}

\section{Weak solutions of degenerate $p$-Laplacians}
\label{section:weak}

A key step in the proof of
Theorem~\ref{mainlocglob} is to prove the existence and
boundedness of solutions of the Dirichlet problem
\begin{equation} \label{eqn:dirichlet}
\begin{cases}
\mx_{p,\tau} u &  = \varphi  \text{ in } \Theta \\
u  & = 0  \text{ in } \partial \Theta,
\end{cases}
\end{equation}
where $\mx_{p,\tau}$ is a degenerate $p$-Laplacian operator with a
zero order term:
\begin{equation} \label{eqn:p-laplacian}
\mx_{p,\tau} u 
= \text{div}\Big(\big|\sqrt{Q} \nabla u\big|^{p-2}Q\nabla u\Big)
- \tau |u|^{p-2}u.
\end{equation}
In this section we will define weak solutions to this equation and
prove that they exist.  

As the first step we define the degenerate Sobolev spaces related to
$Q$.  Detailed discussions of these spaces can be found in
\cite{CMN,CRR,CRW,MRW,MRW2,SW2}; here we will sketch the key ideas and
refer the reader to these references for further information.  Fix
$1\leq p<\infty$ and a matrix function $Q$ such that $\sqrt{Q}\in
L^p_{\emph{loc}}(\Omega)$.   Fix an open set $\Theta \Subset \Omega$,
and for  $1\leq p<\infty$ define $\mathcal{L}^p_Q(\Theta)$ to be the collection
of all measurable $\mathbb{R}^n$ valued functions
${\bf f}=(f_1,...,f_n)$ that satisfy
\begin{equation}\label{normLQ}
\|{\bf f}\|_{\mathcal{L}^p_Q(\Theta)} 
=\bigg(\int_\Theta \big|\sqrt{Q}{\bf f}\big|^p\,dx\bigg)^{1/p} <\infty.
\end{equation}
More properly we define $\mathcal{L}^p_Q(\Theta)$ to be the normed vector
space of equivalence classes under the equivalence relation ${\bf f} \equiv {\bf g}$ if
$\|{\bf f}-{\bf g}\|_{\mathcal{L}^p_Q(\Theta)} = 0$.  Note that if
${\bf f}(x)={\bf g}(x)$ a.e., then ${\bf f} \equiv {\bf g}$, but the
converse need not be true, depending on the degeneracy of~$Q$.  

Let $Lip_Q(\Theta)$ be the collection of all functions
$f\in Lip_{loc}(\Theta)$ such that $f\in L^p(\Theta)$ and
$\nabla f\in \mathcal{L}^p_Q(\Theta)$.  We now define the
corresponding degenerate Sobolev space $\widehat{H}^{1,p}_Q(\Theta)$
to be the formal closure of $Lip_Q(\Theta)$ with respect to the norm
\[ \|f\|_{\widehat{H}^{1,p}_Q(\Theta)} = \bigg[\int_\Theta |f|^p\, dx 
+ \int_\Theta \qform(x,\nabla f)^\frac{p}{2}\,dx\bigg]^\frac{1}{p}
=\bigg[\int_\Theta |f|^p\,dx + \int_S |\sqrt{Q}\nabla
f|^p\,dx\bigg]^\frac{1}{p}. \]
Similarly, define $\widehat{H}^{1,p}_{Q,0}(\Theta) \subset
\widehat{H}^{1,p}_Q(\Theta)$ to be the formal 
closure of $Lip_0(\Theta)$  with respect to this norm. 

Because of the degeneracy of $Q$, we cannot represent either
$\widehat{H}^{1,p}_{Q,0}(\Theta)$ or $\widehat{H}^{1,p}_Q(\Theta)$ as
spaces of functions except in special situations.  But, since
$L^p(\Theta)$ and $\mathcal{L}^p_Q(\Theta)$ are complete, given an
equivalence class of $\widehat{H}^{1,p}_Q(\Theta)$ there exists a
unique pair
$\vec{\bf f}=(f,{\bf g})\in L^p(\Theta)\times \mathcal{L}^p_Q(\Theta)$
that we can use to represent it.  Such pairs are unique and so we
refer to elements of $\widehat{H}^{1,p}_Q(\Theta)$ using their
representative pair.  However, because of the classical example in~\cite{FKS},
${\bf g}$ need not be uniquely determined by the first component $f$
of the pair: if we think of ${\bf g}$ as the ``gradient'' of $f$, then
there exist non-constant functions $f$ whose gradient is $0$.  

On the other hand, since $\sqrt{Q}\in L^p_{loc}(\Omega)$ and since
constant sequences are Cauchy, if $f\in Lip_Q(\Theta)$, then
$(f,\nabla f)\in \widehat{H}^{1,p}_Q(\Theta)$ where $\nabla f$ is the
classical gradient of $f$ in $\Theta$:  see~\cite{GT}. 

We need one structural result about these Sobolev spaces.  The
following result is proved in~\cite{CRR} for the space
$H^{1,p}_Q(\Theta)$, which is the closure of $C^1(\bar{\Theta})$ with
respect to the $\widehat{H}^{1,p}_Q(\Theta)$ norm, but the proof is
identical in our case.

\begin{lem} \label{lemma:bfs} 
  Given $1\leq p<\infty$, $\Theta \subset \Omega$, and a matrix $Q$,
  $\widehat{H}^{1,p}_Q(\Theta)$ and $\widehat{H}^{1,p}_{Q,0} (\Theta)$
  are separable Banach spaces.  If $p>1$, they are reflexive.
\end{lem}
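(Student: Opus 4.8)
\textbf{Proof strategy for Lemma~\ref{lemma:bfs}.}
The plan is to reduce everything to standard functional-analytic facts about Lebesgue-type spaces by exploiting the concrete description of $\widehat{H}^{1,p}_Q(\Theta)$ in terms of representative pairs. First I would record that $L^p(\Theta)$ and $\mathcal{L}^p_Q(\Theta)$ are Banach spaces: completeness of $\mathcal{L}^p_Q(\Theta)$ follows because $\|\cdot\|_{\mathcal{L}^p_Q(\Theta)}$ is, by definition, the pullback of the $L^p(\Theta)$-norm under ${\bf f}\mapsto\sqrt{Q}{\bf f}$, so after passing to equivalence classes one gets an isometric copy of a closed subspace of $L^p(\Theta;\mathbb{R}^n)$; closedness is immediate since a Cauchy sequence of $\sqrt{Q}{\bf f}_k$ converges in $L^p$ to some ${\bf h}$, and one checks ${\bf h}$ is again of the form $\sqrt{Q}{\bf f}$ by passing to an a.e.-convergent subsequence and using that $\sqrt{Q}(x)$ maps $\mathbb{R}^n$ onto a fixed subspace. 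Then the product space $X:=L^p(\Theta)\times\mathcal{L}^p_Q(\Theta)$, with norm $\|(f,{\bf g})\|_X=\big(\|f\|_{L^p}^p+\|{\bf g}\|_{\mathcal{L}^p_Q}^p\big)^{1/p}$, is a Banach space, and it is separable for $1\le p<\infty$ and reflexive and uniformly convex for $1<p<\infty$, since these properties pass to finite products and closed subspaces of $L^p$.

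Next I would identify $\widehat{H}^{1,p}_Q(\Theta)$ with a subspace of $X$. Since $Lip_Q(\Theta)$ embeds into $X$ via $f\mapsto(f,\nabla f)$ (this map is well defined into $X$ because $\sqrt{Q}\in L^p_{\mathrm{loc}}$ and $f\in L^p$, $\nabla f\in\mathcal{L}^p_Q$ by hypothesis on membership in $Lip_Q(\Theta)$), and since the $\widehat{H}^{1,p}_Q(\Theta)$-norm is exactly the restriction of $\|\cdot\|_X$ to the image, the formal completion $\widehat{H}^{1,p}_Q(\Theta)$ is isometrically isomorphic to the closure in $X$ of $\{(f,\nabla f): f\in Lip_Q(\Theta)\}$. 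A closed subspace of a Banach space is Banach, of a separable space is separable, and of a reflexive space is reflexive; hence $\widehat{H}^{1,p}_Q(\Theta)$ is a separable Banach space, reflexive when $p>1$. The same argument applied to the closure of $\{(f,\nabla f): f\in Lip_0(\Theta)\}$ in $X$ gives the statement for $\widehat{H}^{1,p}_{Q,0}(\Theta)$, and this closure is contained in the former since $Lip_0(\Theta)\subset Lip_Q(\Theta)$.

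The one point requiring genuine care — and the main obstacle — is the passage from the \emph{formal} completion of $Lip_Q(\Theta)$ (a priori an abstract metric completion whose elements are equivalence classes of Cauchy sequences) to an \emph{honest} closed subspace of the concrete space $X$. This is precisely the content of the paragraph preceding the lemma: one must argue that the natural map sending a Cauchy sequence $(f_k)$ in $(Lip_Q(\Theta),\|\cdot\|_{\widehat{H}^{1,p}_Q})$ to $\lim_k(f_k,\nabla f_k)\in X$ is well defined (independent of the representative Cauchy sequence), injective, and surjective onto the closure of the image of $Lip_Q(\Theta)$ — in other words that the formal completion and the closure in $X$ are canonically the same Banach space. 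Well-definedness and injectivity follow from the definition of the equivalence relation on Cauchy sequences together with the uniqueness of the representative pair (completeness of $L^p$ and $\mathcal{L}^p_Q$); surjectivity onto the closure is automatic. Once this identification is in place, separability, completeness, and reflexivity are inherited for free, so no further estimates are needed.
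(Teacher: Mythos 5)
Your proposal is correct and follows essentially the route the paper intends: the paper itself does not spell out a proof but defers to \cite{CRR}, and its preceding discussion already sets up exactly the identification you use, namely that completeness of $L^p(\Theta)$ and $\mathcal{L}^p_Q(\Theta)$ lets one represent each element of $\widehat{H}^{1,p}_Q(\Theta)$ by a unique pair in the product space, realizing the formal completion as a closed subspace of $L^p(\Theta)\times\mathcal{L}^p_Q(\Theta)$. From there separability, completeness, and (for $p>1$) reflexivity are inherited from closed subspaces of products of $L^p$ spaces, just as you argue.
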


\medskip

We now define the weak solution of the Dirichlet
problem~\eqref{eqn:dirichlet} for equation~\eqref{eqn:p-laplacian}.
We will assume that $1<p<\infty$, $\tau\geq 0$,
$\vphi\in L^{p'}_{\text{loc}}(\Omega)$, and 
$\sqrt{Q}\in L^p_{\text{loc}}(\Omega)$.
Associated to the Dirichelt problem is the non-linear form
\[ \ma_{p,\tau}:\widehat{H}_Q^{1,p}(\Theta)\times
  \widehat{H}_Q^{1,p}(\Theta)\ra \mathbb{R}, \]
defined for 
  $\vec{\bf u}=(u,\vec{g})$ and $\vec{\bf v} = (v,\vec{h})$ by
\begin{equation} \label{form} 
\ma_{p,\tau}\vec{\bf u}(\vec{\bf v}) = \int_\Theta
  {\lan}Q\vec{g},\vec{g}{\ran}^\frac{p-2}{2}{\lan}Q\vec{g},\vec{h}{\ran}\,dx
  + \tau\int_\Theta |u|^{p-2}uv\,dx; 
\end{equation}
 we use the convention
  that $\ma_{p,\tau}\vec{\bf 0}(\cdot)=0$  if $1<p<2$.  The notation
  used on the left-hand side of \eqref{form} is meant to suggest that for each fixed
  $\vec{\bf u}=(u,\vec{g})\in \widehat{H}_Q^{1,p}(\Theta)$, the operator
  $\ma_{p,\tau}\vec{\bf u}(\cdot)\in \big(\widehat{H}_Q^{1,p}(\Theta)\big)'$;
  see Lemma \ref{boundlemma} below.

We use this form to define a weak solution.

\begin{defn}\label{weaksoldef} 
A weak solution to the Dirichlet problem~\eqref{eqn:dirichlet} is an
element $\vec{\bf u}=(u,\vec{g})\in \widehat{H}^{1,p}_{Q,0}(\Theta)$
such that the equality
\begin{equation} \label{weaksol}
\ma_{p,\tau}\vec{\bf u}(v) =\int_\Theta
{\lan}Q\vec{g},\vec{g}{\ran}^\frac{p-2}{2}{\lan}Q\vec{g},\nabla
v{\ran}\,dx + \tau\int_\Theta |u|^{p-2}uv\,dx= -\int_\Theta \vphi
v\,dx 
\end{equation}
holds for every $v\in Lip_0(\Theta)$.
\end{defn}

\begin{rem} 
  If $\vec{\bf u}\in \widehat{H}^{1,p}_{Q,0}(\Theta)$ is a weak
  solution, then by a standard density argument we have that
  \eqref{weaksol} holds if we replace $(v,\nabla v)$ with any
  $\vec{\bf v}=(v,\vec{h})\in \widehat{H}^{1,p}_{Q,0}(\Theta)$.
\end{rem}

\medskip

We can now state and prove our existence result.

\begin{pro}\label{existence} 
  Let $\Omega \subset \mathbb R^n$ be open.  Given $1<p<\infty$ and
  $\tau>0$, suppose $Q\in L^\frac{p}{2}_{\emph{loc}}(\Omega)$.  Then,
  for any open set $\Theta\Subset\Omega$ the Dirichlet problem
  \eqref{eqn:dirichlet} with $\vphi\in L^{p'}(\Theta)$ has a weak solution
  $\vec{\bf u}=(u,\vec{g})\in \widehat{H}^{1,p}_{Q,0}(\Theta)$.
\end{pro}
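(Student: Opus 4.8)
The plan is to apply Minty's theorem (the Browder--Minty theorem on monotone operators) to the operator naturally associated with the form $\ma_{p,\tau}$ on the reflexive Banach space $\widehat{H}^{1,p}_{Q,0}(\Theta)$, whose reflexivity is guaranteed by Lemma~\ref{lemma:bfs}. First I would verify that for each fixed $\vec{\bf u}\in\widehat{H}^{1,p}_{Q,0}(\Theta)$ the functional $\ma_{p,\tau}\vec{\bf u}(\cdot)$ is bounded and linear, i.e.\ belongs to the dual space (this is Lemma~\ref{boundlemma} referred to in the excerpt), using the Cauchy--Schwarz inequality for the quadratic form $\qform$ together with Hölder's inequality with exponents $p/(p-1)$ and $p$, and similarly for the zero-order term using that $|u|^{p-2}u\in L^{p'}(\Theta)$ whenever $u\in L^p(\Theta)$. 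This defines a map $T:\widehat{H}^{1,p}_{Q,0}(\Theta)\to\big(\widehat{H}^{1,p}_{Q,0}(\Theta)\big)'$ by $T\vec{\bf u}=\ma_{p,\tau}\vec{\bf u}(\cdot)$, and the right-hand side $\vec{\bf v}\mapsto-\int_\Theta\varphi v\,dx$ is likewise a bounded linear functional since $\varphi\in L^{p'}(\Theta)$ and $v\in L^p(\Theta)$. Solving~\eqref{eqn:dirichlet} weakly then amounts to solving $T\vec{\bf u}=F$ in the dual.

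Next I would check the three hypotheses of Minty's theorem for $T$: monotonicity, coercivity, and hemicontinuity (demicontinuity). For \emph{monotonicity}, I would compute $\langle T\vec{\bf u}-T\vec{\bf w},\vec{\bf u}-\vec{\bf w}\rangle$ and split it into the principal part and the zero-order part. The zero-order part is nonnegative because $s\mapsto|s|^{p-2}s$ is monotone on $\R$ and $\tau>0$. The principal part is nonnegative by the standard vector inequality $\big(|\xi|^{p-2}\xi-|\eta|^{p-2}\eta\big)\cdot(\xi-\eta)\ge 0$ applied pointwise with $\xi=\sqrt{Q}\vec{g}$, $\eta=\sqrt{Q}\vec{h}$, after rewriting $\langle Q\vec g,\vec g\rangle^{(p-2)/2}\langle Q\vec g,\vec h\rangle = \big(|\sqrt Q\vec g|^{p-2}\sqrt Q\vec g\big)\cdot\big(\sqrt Q\vec h\big)$ using self-adjointness of $\sqrt Q$. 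For \emph{coercivity}, the same two pieces give $\langle T\vec{\bf u},\vec{\bf u}\rangle = \int_\Theta|\sqrt Q\vec g|^p\,dx + \tau\int_\Theta|u|^p\,dx \ge c\,\|\vec{\bf u}\|_{\widehat{H}^{1,p}_{Q,0}(\Theta)}^p$, which divided by $\|\vec{\bf u}\|$ tends to infinity since $p>1$. For \emph{hemicontinuity}, I would show $t\mapsto\langle T(\vec{\bf u}+t\vec{\bf v}),\vec{\bf w}\rangle$ is continuous on $[0,1]$ for fixed $\vec{\bf u},\vec{\bf v},\vec{\bf w}$, which follows from the continuity of the integrands in $t$ and a dominated convergence argument, the dominating functions coming from the same Hölder estimates used for boundedness. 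Minty's theorem then yields $\vec{\bf u}\in\widehat{H}^{1,p}_{Q,0}(\Theta)$ with $T\vec{\bf u}=F$, i.e.\ a weak solution in the sense of Definition~\ref{weaksoldef} (first for $v\in Lip_0(\Theta)$, then by density for all test pairs as in the Remark).

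I expect the main technical obstacle to be the bookkeeping forced by the degeneracy of $Q$: elements of $\widehat{H}^{1,p}_{Q,0}(\Theta)$ are equivalence classes of pairs $(u,\vec g)$ rather than genuine functions and their gradients, so one must be careful that all the pointwise inequalities and integrations are phrased in terms of $\sqrt Q\vec g$ (which \emph{is} well defined on the equivalence class) and never in terms of $\vec g$ alone, and that the monotonicity computation does not implicitly assume $\vec g$ is determined by $u$. A secondary point is handling the case $1<p<2$, where $|\xi|^{p-2}\xi$ is singular at the origin: here one uses the convention $\ma_{p,\tau}\vec{\bf 0}(\cdot)=0$ stated after~\eqref{form}, and the monotonicity and hemicontinuity inequalities still hold (the relevant vector inequality degrades to a form with a weight but remains nonnegative), so the argument goes through with only notational care. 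The hypothesis $Q\in L^{p/2}_{\mathrm{loc}}(\Omega)$ enters precisely to guarantee $\sqrt Q\in L^p_{\mathrm{loc}}(\Omega)$, which is what makes $Lip_0(\Theta)\subset\widehat{H}^{1,p}_{Q,0}(\Theta)$ and hence makes the space nontrivial and the density argument legitimate.
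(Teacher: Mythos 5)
Your proposal is correct and follows essentially the same route as the paper: set up $T=\ma_{p,\tau}$ on the separable, reflexive space $\widehat{H}^{1,p}_{Q,0}(\Theta)$, verify boundedness, monotonicity (via the pointwise vector inequality applied to $\sqrt{Q}\vec g$ and $\sqrt{Q}\vec h$), hemicontinuity (via the Lindqvist-type estimates and dominated convergence), and coercivity, and then invoke Minty's theorem. The only cosmetic difference is that you prove full coercivity where the paper checks the weaker ``almost coercive'' condition of Theorem~\ref{thm:minty}; your estimate $\ma_{p,\tau}\vec{\bf u}(\vec{\bf u})\geq\min\{1,\tau\}\|\vec{\bf u}\|^p$ is exactly the one the paper uses, and your observations about the degeneracy of $Q$ and the $1<p<2$ convention match the paper's treatment.
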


To prove Proposition~\ref{existence} we will use  Minty's theorem as
found in \cite{Sh}; this result is a generalization of the Lax-Milgram
theorem to general Banach spaces.    To state it we fix some
notation.  Let $X$ be a separable, reflexive Banach space with norm
$\|\cdot\|_X$, and let $X^*$  denote its dual space.  Given a map $T :
X \rightarrow X^*$ and $u,\,v \in X$, we will write $T(u)(v)=\langle
T(u),v\rangle$.  

\begin{thm}\emph{(Minty)} \label{thm:minty}
  Let $X$ be a separable, reflexive Banach space and fix $\Gamma \in X^*$.
  Let $T:X\ra X^*$ be an operator that is:
\begin{itemize}
\item  bounded:  $T$ maps bounded subsets of $X$ to bounded subsets of $X'$;

\item monotone:   ${\lan}T(u)-T(v),u-v{\ran} \geq 0$ for all $u,\,v\in
  X$;

\item hemicontinuous:  for $z\in R$, the mapping $z\mapsto T[u+zv](v)$
  is continuous for all $u,\,v\in X$;

\item almost coercive: there exists $\beta>0$ such that
  ${\lan}Tv,v{\ran} > \langle \Gamma, v \rangle$ for all $v\in X$ such that
  $\|v\|_X>\beta$.  

\end{itemize}
Then the set $u \in X$ such that $T(u)=\Gamma$ is non-empty.
\end{thm}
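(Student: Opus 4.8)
The plan is to give the classical proof of Minty's theorem: a Galerkin (finite-dimensional) approximation to produce approximate solutions, uniform bounds from the coercivity, extraction of a weak limit using reflexivity, and finally a monotonicity (``Minty'') argument, fed by hemicontinuity, to identify the limit as a genuine solution. First I would reduce to the case $\Gamma = 0$: replacing $T$ by $\tilde T(u) = T(u) - \Gamma$ preserves boundedness, monotonicity and hemicontinuity, and the almost coercivity becomes the statement that $\langle \tilde T v, v\rangle > 0$ whenever $\|v\|_X > \beta$. It then suffices to produce $u \in X$ with $\tilde T(u) = 0$.

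Next I would set up the Galerkin scheme. Using separability of $X$, fix a countable dense set $\{x_k\}$ and let $X_n = \mathrm{span}\{x_1,\dots,x_n\}$, so $X_1 \subset X_2 \subset \cdots$ with $\overline{\bigcup_n X_n} = X$. Let $\iota_n : X_n \hookrightarrow X$ be the inclusion and $\iota_n^* : X^* \to X_n^*$ its adjoint, and set $T_n = \iota_n^* \circ \tilde T \circ \iota_n : X_n \to X_n^*$. A standard lemma from the theory of monotone operators — an everywhere-defined, locally bounded, monotone, hemicontinuous operator is demicontinuous — applies here (local boundedness being built into the ``bounded'' hypothesis), so $\tilde T$ is demicontinuous and hence each $T_n$ is continuous, all norms on the finite-dimensional $X_n$ being equivalent. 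Identifying $X_n \cong \mathbb R^{m}$, $m = \dim X_n$, via a basis, $T_n$ becomes a continuous vector field on $\mathbb R^m$ with $\langle T_n(v),v\rangle = \langle \tilde T(v),v\rangle > 0$ for $\|v\|_X > \beta$; the familiar corollary of Brouwer's fixed point theorem (a continuous field pointing outward on a sufficiently large sphere has a zero inside the ball) produces $u_n \in X_n$ with $T_n(u_n) = 0$, that is, $\langle \tilde T(u_n), v\rangle = 0$ for all $v \in X_n$. Testing against $v = u_n$ gives $\langle \tilde T(u_n), u_n\rangle = 0$, and combined with almost coercivity this forces $\|u_n\|_X \le \beta$ for every $n$.

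Now I would pass to the limit. By the uniform bound $\|u_n\|_X \le \beta$ and the boundedness of $T$, the sequences $(u_n)$ in $X$ and $(\tilde T(u_n))$ in $X^*$ are bounded; since $X$, and hence $X^*$, is reflexive, after passing to a subsequence $u_n \rhu u$ in $X$ and $\tilde T(u_n) \rhu \chi$ in $X^*$. For fixed $v \in X_m$ and $n \ge m$ one has $\langle \tilde T(u_n), v\rangle = 0$; letting $n \to \infty$ gives $\langle \chi, v\rangle = 0$, and since $\bigcup_m X_m$ is dense, $\chi = 0$. To identify $u$ as a solution, use monotonicity: for every $v \in X$,
\[ 0 \le \langle \tilde T(u_n) - \tilde T(v),\, u_n - v\rangle = \langle \tilde T(u_n), u_n\rangle - \langle \tilde T(u_n), v\rangle - \langle \tilde T(v),\, u_n - v\rangle. \]
Using $\langle \tilde T(u_n), u_n\rangle = 0$, $\langle \tilde T(u_n), v\rangle \to \langle \chi, v\rangle = 0$ and $u_n \rhu u$, passing to the limit yields $\langle \tilde T(v),\, u - v\rangle \le 0$ for all $v \in X$. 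Taking $v = u - tw$ with $t > 0$ and arbitrary $w \in X$ gives $\langle \tilde T(u - tw), w\rangle \le 0$, and letting $t \to 0^+$, hemicontinuity of $\tilde T$ gives $\langle \tilde T(u), w\rangle \le 0$; replacing $w$ by $-w$ shows $\langle \tilde T(u), w\rangle = 0$ for all $w \in X$, i.e.\ $\tilde T(u) = 0$, equivalently $T(u) = \Gamma$.

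The step I expect to require the most care is the finite-dimensional solvability: checking that the Galerkin operators $T_n$ are genuinely continuous (the point where local boundedness is used to upgrade hemicontinuity to demicontinuity) and applying the Brouwer corollary with the coercivity bookkeeping that delivers the $n$-uniform bound $\|u_n\|_X \le \beta$. The Minty trick in the last paragraph is short once the approximating sequence and its bounds are in hand, but it is the only place where monotonicity and hemicontinuity are essential rather than mere strong continuity of $T$.
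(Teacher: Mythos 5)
The paper does not prove Theorem~\ref{thm:minty}; it cites it directly from Showalter \cite{Sh} and uses it as a black box in the proof of Proposition~\ref{existence}. There is therefore no in-paper proof to compare against. Your proposal supplies the standard Browder--Minty argument: reduce to $\Gamma = 0$; set up a Galerkin scheme on a nested exhaustion of finite-dimensional subspaces (available by separability); solve each finite-dimensional problem via the Brouwer outward-pointing-vector-field lemma, using the fact that boundedness plus monotonicity plus hemicontinuity yield demicontinuity and hence continuity of $T_n$ in finite dimensions; obtain the $n$-uniform bound $\|u_n\|_X \le \beta$ from testing with $u_n$ and almost coercivity; extract weak limits $u_n \rightharpoonup u$ and $\tilde T(u_n) \rightharpoonup \chi = 0$ using reflexivity; and finally identify $\tilde T(u) = 0$ by the Minty trick, taking $v = u - tw$ and letting $t \to 0^+$ via hemicontinuity. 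This is correct and complete as an outline. One small point that deserves an explicit sentence rather than a gesture: the almost coercivity condition is phrased in terms of $\|\cdot\|_X$, whereas the Brouwer corollary requires a Euclidean sphere in $X_n \cong \mathbb{R}^m$ on which $\langle T_n(v), v\rangle > 0$; you need to invoke equivalence of norms on the finite-dimensional $X_n$ to choose a Euclidean radius $R$ large enough that $|v| = R$ forces $\|v\|_X > \beta$. You mention norm equivalence, but the purpose it serves should be spelled out.
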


To apply Theorem~\ref{thm:minty} to solve the Dirichlet
problem~\eqref{eqn:dirichlet}, let
$X=\widehat{H}^{1,p}_{Q,0}(\Theta)$; then by Lemma~\ref{lemma:bfs},  $X$ 
is a separable, reflexive Banach space. 
Fix
$\vphi\in L^{p'}(\Theta)$; given $\vec{\bf v}=(v,\vec{h})$, define $\Gamma \in X^*$ by
\begin{equation} \label{eqn:Gamma} 
\Gamma(\vec{\bf{v}}) = - \int_\Theta \varphi v\,dx. 
\end{equation}
Let $T= \ma_{p,\tau}$; then
$\vec{\bf u}\in \widehat{H}^{1,p}_{Q,0}(\Theta)$ is a weak solution if
$\ma_{p,\tau}\vec{\bf u}(\vec{\bf v}) = \Gamma(\vec{\bf v})$ for every
$\vec{\bf v}=(v,\vec{h})\in \widehat{H}^{1,p}_{Q,0}(\Theta)$.  By Minty's
theorem, such a $\vec{\bf u}$ exists if $\ma_{p,\tau}$ is
bounded, monotone, hemicontinuous, and almost coercive.   To complete
the proof of Proposition~\ref{existence}, we will  prove
each of these properties in turn.  

We begin with three useful inequalities which we record as a lemma.
For their proof, see~\cite[Ch. 10]{PL}.

\begin{lem} \label{lemma-PL}
For all $s,\,r \in \mathbb{R}^n$, 
\begin{equation} \label{eqn:pl1}
 \langle |s|^{p-2}s-|r|^{p-2}r, s-r\rangle \geq 0;
\end{equation}
if $p \geq 2$, 
\begin{equation} \label{eqn:pl2}
\big| |s|^{p-2}s-|r|^{p-2}r\big| \leq
c(p)\big(|s|^{p-2}+|r|^{p-2}\big)|s-r|; 
\end{equation}
if $1<p\leq 2$, 
\begin{equation} \label{eqn:pl3}
\big| |s|^{p-2}s-|r|^{p-2}r\big| 
\leq c(p)|s-r|^{p-1}.
\end{equation}
\end{lem}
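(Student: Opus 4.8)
The plan is to reduce all three estimates to elementary calculus for the vector field $V\colon\mathbb{R}^n\to\mathbb{R}^n$, $V(\xi)=|\xi|^{p-2}\xi$. This $V$ is the gradient of the convex function $\xi\mapsto\frac1p|\xi|^p$; it is $C^1$ on $\mathbb{R}^n\setminus\{0\}$, and in fact on all of $\mathbb{R}^n$ when $p\ge 2$, with Jacobian $DV(\xi)=|\xi|^{p-2}I+(p-2)|\xi|^{p-4}\,\xi\xi^{T}$ for $\xi\neq 0$. Since $\xi\xi^{T}$ has eigenvalues $|\xi|^2$ (in the direction $\xi$) and $0$, the symmetric matrix $DV(\xi)$ has eigenvalues $(p-1)|\xi|^{p-2}$ and $|\xi|^{p-2}$, so $|DV(\xi)|_{\mathrm{op}}\le(p-1)|\xi|^{p-2}$ when $p\ge2$ and $|DV(\xi)|_{\mathrm{op}}\le|\xi|^{p-2}$ when $1<p\le2$. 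These are the only facts about $V$ I would need.

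For \eqref{eqn:pl1} I would expand $\langle|s|^{p-2}s-|r|^{p-2}r,\,s-r\rangle=|s|^p+|r|^p-\big(|s|^{p-2}+|r|^{p-2}\big)\langle s,r\rangle$ and then apply Cauchy--Schwarz, $\langle s,r\rangle\le|s||r|$, to bound the right side below by $|s|^p+|r|^p-|s|^{p-1}|r|-|r|^{p-1}|s|=\big(|s|^{p-1}-|r|^{p-1}\big)\big(|s|-|r|\big)\ge0$, the last step holding because $t\mapsto t^{p-1}$ is nondecreasing on $[0,\infty)$; equivalently, this is just monotonicity of the gradient of a convex function. For \eqref{eqn:pl2}, with $p\ge2$, I would integrate along the segment joining $r$ and $s$, writing $V(s)-V(r)=\int_0^1 DV\big(r+t(s-r)\big)(s-r)\,dt$ --- legitimate since $V\in C^1(\mathbb{R}^n)$ in this range --- then take norms and use the Jacobian bound together with $|r+t(s-r)|\le\max(|s|,|r|)$ and $p-2\ge0$ to obtain $|V(s)-V(r)|\le(p-1)\max(|s|,|r|)^{p-2}|s-r|\le(p-1)\big(|s|^{p-2}+|r|^{p-2}\big)|s-r|$, i.e.\ \eqref{eqn:pl2} with $c(p)=p-1$.

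For \eqref{eqn:pl3}, with $1<p\le2$, the obstruction is that $V$ is singular (not even Lipschitz) at the origin, so the segment from $r$ to $s$ may pass through the singularity and the integral representation cannot be used blindly; I would get around this with a dichotomy. Set $M=\max(|s|,|r|)$. If $|s-r|\ge\tfrac12 M$, then crudely $|V(s)-V(r)|\le|s|^{p-1}+|r|^{p-1}\le2M^{p-1}\le2^{p}|s-r|^{p-1}$. If instead $|s-r|<\tfrac12 M$, say $|s|=M$, then $|r|\ge M-|s-r|>\tfrac12 M$ and every point $z_t=r+t(s-r)=(1-t)r+ts$ of the segment satisfies $|z_t|\ge|s|-(1-t)|s-r|>\tfrac12 M$, so the segment stays away from the origin and the integral representation is available; the Jacobian bound then gives $|V(s)-V(r)|\le(M/2)^{p-2}|s-r|$, and since $p-2\le0$ and $|s-r|^{2-p}\le(M/2)^{2-p}$ this is at most $|s-r|^{p-1}$. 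Combining the two cases proves \eqref{eqn:pl3} with, say, $c(p)=2^{p}$. I expect this $1<p\le2$ case to be the only step requiring genuine care --- keeping the path of integration away from the singularity of $V$ is the crux --- while everything else is routine manipulation of elementary inequalities; the details can also be found in \cite[Ch.~10]{PL}.
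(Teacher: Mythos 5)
Your proof is correct. The paper does not actually prove Lemma~\ref{lemma-PL}; it only cites \cite[Ch.~10]{PL} for the three inequalities. Your argument is a complete, self-contained proof of all three, with explicit constants ($c(p)=p-1$ in \eqref{eqn:pl2} and $c(p)=2^p$ in \eqref{eqn:pl3}), and it follows the standard route: direct expansion plus Cauchy--Schwarz and monotonicity of $t\mapsto t^{p-1}$ for \eqref{eqn:pl1}, the fundamental theorem of calculus along the segment with the eigenvalue bound $|DV(\xi)|_{\mathrm{op}}\le(p-1)|\xi|^{p-2}$ for \eqref{eqn:pl2}, and a dichotomy on whether $|s-r|$ is large or small relative to $\max(|s|,|r|)$ to avoid the singularity of $V$ at the origin for \eqref{eqn:pl3}. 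The handling of the singular case is exactly where care is needed and you get it right: in the small-increment regime the segment is shown to stay in $\{|z|>M/2\}$, so the integral representation is legitimate, and the exponent $p-2\le 0$ combined with $|s-r|<M/2$ then converts $(M/2)^{p-2}|s-r|$ into $|s-r|^{p-1}$. No gaps.
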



\begin{lem}\label{boundlemma} $\ma_{p,\tau}$ is bounded on $\widehat{H}^{1,p}_{Q,0}(\Theta)$ for all $1< p<\infty$ and $\tau\in \mathbb{R}$.
\end{lem}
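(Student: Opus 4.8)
The plan is to show directly that for each fixed $\vec{\bf u} = (u,\vec{g}) \in \widehat{H}^{1,p}_{Q,0}(\Theta)$ the linear functional $\ma_{p,\tau}\vec{\bf u}(\cdot)$ is bounded on $\widehat{H}^{1,p}_{Q,0}(\Theta)$, with operator norm controlled by a power of $\|\vec{\bf u}\|_{\widehat{H}^{1,p}_{Q,0}(\Theta)}$; boundedness of $\ma_{p,\tau}$ as a map into the dual then follows since on bounded sets this power stays bounded. So fix $\vec{\bf v} = (v,\vec{h})$ and estimate the two integrals in \eqref{form} separately. For the zero-order term, $\tau \int_\Theta |u|^{p-2} u v\,dx$, apply H\"older's inequality with exponents $p'$ and $p$ to get the bound $|\tau|\, \|u\|_{L^p(\Theta)}^{p-1}\|v\|_{L^p(\Theta)} \le |\tau|\,\|\vec{\bf u}\|^{p-1}\|\vec{\bf v}\|$, using that $\|u\|_{L^p(\Theta)} \le \|\vec{\bf u}\|_{\widehat{H}^{1,p}_{Q,0}(\Theta)}$.

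The first-order term is the substantive one. Using the Cauchy--Schwarz-type inequality for the form $Q$ recorded in the excerpt, $|\lan Q\vec g,\vec h\ran| \le \lan Q\vec g,\vec g\ran^{1/2}\lan Q\vec h,\vec h\ran^{1/2}$, we bound the integrand by
\[
\lan Q\vec g,\vec g\ran^{\frac{p-2}{2}}\,|\lan Q\vec g,\vec h\ran|
\le \lan Q\vec g,\vec g\ran^{\frac{p-1}{2}}\,\lan Q\vec h,\vec h\ran^{\frac12}
= |\sqrt Q\,\vec g|^{p-1}\,|\sqrt Q\,\vec h|.
\]
Integrating over $\Theta$ and applying H\"older with exponents $p' = p/(p-1)$ and $p$ gives
\[
\int_\Theta |\sqrt Q\,\vec g|^{p-1}|\sqrt Q\,\vec h|\,dx
\le \Big(\int_\Theta |\sqrt Q\,\vec g|^p\,dx\Big)^{\frac{p-1}{p}}\Big(\int_\Theta |\sqrt Q\,\vec h|^p\,dx\Big)^{\frac1p},
\]
which is $\le \|\vec{\bf u}\|^{p-1}_{\widehat{H}^{1,p}_{Q,0}(\Theta)}\,\|\vec{\bf v}\|_{\widehat{H}^{1,p}_{Q,0}(\Theta)}$. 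Combining the two estimates yields
\[
|\ma_{p,\tau}\vec{\bf u}(\vec{\bf v})| \le (1 + |\tau|)\,\|\vec{\bf u}\|^{p-1}_{\widehat{H}^{1,p}_{Q,0}(\Theta)}\,\|\vec{\bf v}\|_{\widehat{H}^{1,p}_{Q,0}(\Theta)},
\]
so $\ma_{p,\tau}\vec{\bf u}(\cdot) \in \big(\widehat{H}^{1,p}_{Q,0}(\Theta)\big)'$ with norm at most $(1+|\tau|)\|\vec{\bf u}\|^{p-1}$, and $\ma_{p,\tau}$ maps bounded sets to bounded sets.

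I expect no real obstacle here; the only points requiring a word of care are the case $1<p<2$, where $\lan Q\vec g,\vec g\ran$ may vanish and the exponent $\frac{p-2}{2}$ is negative — this is exactly why the convention $\ma_{p,\tau}\vec{\bf 0}(\cdot)=0$ is in place, and more generally one checks that on the set where $\lan Q\vec g,\vec g\ran = 0$ the Cauchy--Schwarz inequality forces $\lan Q\vec g,\vec h\ran = 0$, so the integrand is interpreted as $0$ there and the bound $|\sqrt Q\vec g|^{p-1}|\sqrt Q\vec h|$ still holds pointwise a.e. The other minor point is that the estimates above are a priori stated for Lipschitz representatives, but since both sides are continuous in the $\widehat{H}^{1,p}_{Q,0}(\Theta)$ norm and $Lip_0(\Theta)$ is dense, the inequality extends to all of $\widehat{H}^{1,p}_{Q,0}(\Theta)$; alternatively, one works directly with representative pairs $(u,\vec g)$, $(v,\vec h)$ as the definitions permit.
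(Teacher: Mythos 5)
Your proof is correct and follows essentially the same route as the paper: bound the zero-order term by H\"older, bound the principal term pointwise via the Cauchy--Schwarz inequality for the $Q$-form and then H\"older, arriving at $|\ma_{p,\tau}\uu(\vv)| \le (1+|\tau|)\|\uu\|^{p-1}\|\vv\|$. The extra remarks on the $1<p<2$ case and on passing from Lipschitz representatives to the full space are sensible but not needed beyond what the paper already records.
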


\begin{proof}
  Fix $1< p<\infty$ and $\tau\in \mathbb{R}$.  Let
  $\uu=(u,\vec{g}),\vv=(v,\vec{h})\in
  \widehat{H}^{1,p}_{Q,0}(\Theta)$.  If we apply H\"older's inequality
  twice, then we have that
\begin{align*}
 |\ma_{p,\tau}\uu(\vv)| 
&\leq \bigg|\int_\Theta \lan Q\vec{g},\vec{g} \ran^\frac{p-2}{2} 
\lan Q\vec{g},\vec{h}\ran dx\bigg| 
+ |\tau|\bigg|\int_\Theta |u|^{p-2}uvdx\bigg|\\
  &\leq \int_\Theta \lan Q\vec{g},\vec{g}\ran^\frac{p-1}{2}\lan
    Q\vec{h},\vec{h}\ran^\frac{1}{2}dx +
    |\tau|\|u\|^{p-1}_{L^p(\Theta)}\|v\|_{L^p(\Theta)}\\ 
  &\leq \|\sqrt{Q}\;
    \vec{g}\|^{p-1}_{L^p(\Theta)}\|\sqrt{Q}\;\vec{h}\|_{L^p(\Theta)}
+|\tau|\|u\|^{p-1}_{L^p(\Theta)}\|v\|_{L^p(\Theta)} \\ 
  &\leq
  (1+|\tau|)\|\uu\|_{\widehat{H}^{1,p}_{Q,0}(\Theta)}^{p-1}
\|\vv\|_{\widehat{H}^{1,p}_{Q,0}(\Theta)}.
\end{align*}
It follows at once from this inequality that $\ma_{p,\tau}$ is bounded.
\end{proof}


\begin{lem}\label{monolemma} 
$\ma_{p,\tau}$ is monotone for all $1< p<\infty$ and $\tau\geq 0$.
\end{lem}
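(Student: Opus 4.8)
The plan is to prove monotonicity by splitting the form $\ma_{p,\tau}\uu(\vv)$ into its two natural pieces and handling each separately. Write $\uu = (u,\vec g)$ and $\vv = (v,\vec h)$ in $\widehat{H}^{1,p}_{Q,0}(\Theta)$. Then
\[
\lan \ma_{p,\tau}\uu - \ma_{p,\tau}\vv,\, \uu - \vv \ran
= I + \tau\, J,
\]
where
\[
I = \int_\Theta \Big( \lan Q\vec g,\vec g\ran^{\frac{p-2}{2}}\lan Q\vec g, \vec g - \vec h\ran - \lan Q\vec h,\vec h\ran^{\frac{p-2}{2}}\lan Q\vec h, \vec g - \vec h\ran\Big)\,dx
\]
and $J = \int_\Theta \big(|u|^{p-2}u - |v|^{p-2}v\big)(u-v)\,dx$. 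The second term is immediate: by \eqref{eqn:pl1} applied in $\mathbb R^1$ (i.e.\ $n=1$), the integrand of $J$ is pointwise nonnegative, so $\tau J \geq 0$ since $\tau \geq 0$.

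For the first term $I$, the key observation is that the integrand can be rewritten in terms of the vectors $\sqrt{Q(x)}\,\vec g(x)$ and $\sqrt{Q(x)}\,\vec h(x)$ in $\mathbb R^n$. Indeed, using $\lan Q\vec g,\vec g\ran = |\sqrt Q\,\vec g|^2$ and $\lan Q\vec g,\vec h\ran = \lan \sqrt Q\,\vec g,\sqrt Q\,\vec h\ran$, the integrand of $I$ at a.e.\ $x$ becomes exactly
\[
\lan |\sqrt Q\,\vec g|^{p-2}\sqrt Q\,\vec g - |\sqrt Q\,\vec h|^{p-2}\sqrt Q\,\vec h,\ \sqrt Q\,\vec g - \sqrt Q\,\vec h\ran,
\]
which is nonnegative pointwise by \eqref{eqn:pl1} with $s = \sqrt{Q(x)}\,\vec g(x)$ and $r = \sqrt{Q(x)}\,\vec h(x)$. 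Hence $I \geq 0$, and combining, $\lan \ma_{p,\tau}\uu - \ma_{p,\tau}\vv, \uu - \vv\ran \geq 0$.

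The only genuine subtlety — and the step I would be most careful about — is the convention $\ma_{p,\tau}\vec{\bf 0}(\cdot) = 0$ when $1<p<2$, which is in place because $\lan Q\vec g,\vec g\ran^{\frac{p-2}{2}}$ is not defined where $\sqrt Q\,\vec g = 0$. I would check that the pointwise identity and the inequality \eqref{eqn:pl1} remain valid on the set where $\sqrt Q\,\vec g = 0$ or $\sqrt Q\,\vec h = 0$: on such a set the corresponding term $|\cdot|^{p-2}(\cdot)$ is taken to be $0$ (consistent with the convention), and \eqref{eqn:pl1} still holds since, e.g., $\lan -|r|^{p-2}r, -r\ran = |r|^p \geq 0$. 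So the rewriting and the sign are unaffected, and integrability of each piece is already guaranteed by the computations in Lemma~\ref{boundlemma}. This completes the proof.
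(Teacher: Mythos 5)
Your proof is correct and follows essentially the same route as the paper's: split the form into the gradient piece and the zero-order piece, reduce the gradient piece to the monotonicity inequality \eqref{eqn:pl1} applied to $s=\sqrt{Q}\,\vec g$ and $r=\sqrt{Q}\,\vec h$, and apply \eqref{eqn:pl1} in $\mathbb R^1$ to the zero-order piece. Your extra remark carefully verifying the case $1<p<2$ on the set where $\sqrt{Q}\,\vec g$ or $\sqrt{Q}\,\vec h$ vanishes is a welcome addition that the paper's proof leaves implicit.
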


\begin{proof}
 Fix $p$ and $\tau$, and let $\uu,\vv\in \widehat{H}^{1,p}(\Theta)$ be as in
 Lemma~\ref{boundlemma}.    Then,
\begin{align*}
\langle \ma_{p, \tau}\uu-\ma_{p, \tau}\vv, \uu-\vv \rangle
& = \ma_{p, \tau}\uu(\uu-\vv) - \ma_{p, \tau} \vv(\uu-\vv) \\
&=\int_{\Omega} \langle Q\vec{g},\vec{g} \rangle ^\frac{p-2}{2}
  \langle Q\vec{g}, 
\vec{g} - \vec{h} \rangle \,dx \\
& \qquad \qquad  -\;\int_{\Omega} \langle Q\vec{h}, \vec{h} \rangle
  ^\frac{p-2}{2} \langle Q\nabla \vec{h}, \vec{g} - \vec{h} \rangle
  \,dx   \\
& \qquad \qquad  +\;\tau\bigg(  \int_{\Omega}  (|u|^{p-2} u-|v|^{p-2}v)
  (u- v) \,dx \bigg)\\ 
&= I_1+\tau I_2.
\end{align*}

We estimate $I_1$ and $I_2$  separately, beginning with $I_2$.   By
inequality~\eqref{eqn:pl1}, 
\[  I_2 = \int_{\Omega}  \langle |u|^{p-2}u-|v|^{p-2}v, u - v
  \rangle \,dx\geq 0. \]

To estimate $I_1$ note that by the symmetry of $Q$ we have that
$\lan Q\vec{g},\vec{g}\ran^\frac{1}{2} = |\sqrt{Q}\vec{g}|$.  Hence,
\begin{align*}
I_1
&= \int_\Theta |\sqrt{Q}\vec{g}|^{p-2}\lan
  Q\vec{g},\vec{g}-\vec{h}\ran \,dx 
- \int_\Theta |\sqrt{Q}\vec{h}|^{p-2}\lan Q\vec{h}, \vec{g}-\vec{h}\ran \,dx\\
&= \int_\Theta \lan |\sqrt{Q}\vec{g}|^{p-2}\sqrt{Q}\vec{g} -
|\sqrt{Q}\vec{h}|^{p-2}\sqrt{Q}\vec{h}, \sqrt{Q}\vec{g} -
\sqrt{Q}\vec{h}\ran \,dx.  
\end{align*}
With $s=\sqrt{Q}\,\vec{g}$ and
$r=\sqrt{Q}\,\vec{h}$, the integrand is again of the form
\eqref{eqn:pl1} and so non-negative.  Thus
$I_1\geq 0$ and our proof is complete.
\end{proof}


\begin{lem}\label{hemilemma} $\ma_{p,\tau}$ is hemicontinuous for all $1<p<\infty$ and $\tau\in \mathbb{R}$.
\end{lem}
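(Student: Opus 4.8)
The plan is to fix $\uu=(u,\vec{g})$ and $\vv=(v,\vec{h})$ in $\widehat{H}^{1,p}_{Q,0}(\Theta)$ and show that the scalar function
\[
z \;\longmapsto\; \ma_{p,\tau}[\uu+z\vv](\vv)
= \int_\Theta \lan Q(\vec{g}+z\vec{h}),\vec{g}+z\vec{h}\ran^{\frac{p-2}{2}}
\lan Q(\vec{g}+z\vec{h}),\vec{h}\ran \,dx
+ \tau\int_\Theta |u+zv|^{p-2}(u+zv)\,v\,dx
\]
is continuous on $\mathbb{R}$. It suffices to prove sequential continuity: if $z_k\to z_0$ in $\mathbb{R}$, then the corresponding integrals converge. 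First I would observe that the two terms can be treated independently, and each has the same structure once we pass through $\sqrt{Q}$: writing $\vec{G}=\sqrt{Q}\,\vec{g}$ and $\vec{H}=\sqrt{Q}\,\vec{h}$, the first integrand equals $|\vec{G}+z\vec{H}|^{p-2}\lan \vec{G}+z\vec{H},\vec{H}\ran$, and the zero-order term is the same expression with the $\R^n$-vectors $\vec{G},\vec{H}$ replaced by the scalars $u,v$. So it is enough to prove: if $F,H\in L^p(\Theta;\R^m)$ then $z\mapsto \int_\Theta |F+zH|^{p-2}\lan F+zH,H\ran\,dx$ is continuous, and then apply this with $m=n$ (using $|\vec{G}|=|\sqrt{Q}\vec{g}|\in L^p$, which holds since $\uu\in\widehat{H}^{1,p}_{Q,0}(\Theta)$) and with $m=1$.

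For the pointwise-in-$z$ claim, I would use the dominated convergence theorem. The integrand $g_k(x):=|F(x)+z_kH(x)|^{p-2}\lan F(x)+z_kH(x),H(x)\ran$ converges pointwise a.e.\ to $g_0(x)$ as $k\to\infty$, by continuity of the map $w\mapsto |w|^{p-2}w$ on $\R^m$ (here the convention $\ma_{p,\tau}\vec{\bf 0}(\cdot)=0$ when $1<p<2$ handles the only point of discontinuity when $p<2$, namely $w=0$, and in any case this is a measure-zero issue once integrated). For the domination, since $\{z_k\}$ is bounded, say $|z_k|\le M$, we have $|F+z_kH|\le |F|+M|H|$, so
\[
|g_k(x)| \le |F(x)+z_kH(x)|^{p-1}|H(x)| \le (|F(x)|+M|H(x)|)^{p-1}|H(x)|.
\]
By Young's (or Hölder's) inequality the right-hand side is bounded by $C(p,M)\big(|F(x)|^p+|H(x)|^p\big)\in L^1(\Theta)$, since $F,H\in L^p(\Theta)$. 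Dominated convergence then gives $\int g_k\to\int g_0$, which is exactly continuity of the map at $z_0$; since $z_0$ and the sequence were arbitrary, the map is continuous on all of $\mathbb{R}$. Applying this with the data $(\vec{G},\vec{H})$ for the principal term and $(u,v)$ for the zero-order term, and adding (with the factor $\tau$), shows $z\mapsto \ma_{p,\tau}[\uu+z\vv](\vv)$ is continuous. As $\uu,\vv$ were arbitrary, $\ma_{p,\tau}$ is hemicontinuous.

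The only mildly delicate point — and the one I would be careful to state cleanly — is the behavior of $w\mapsto |w|^{p-2}w$ near $w=0$ when $1<p<2$: there $|w|^{p-2}w\to 0$ as $w\to 0$, so the map is still continuous (indeed Hölder continuous by \eqref{eqn:pl3} with $r=0$), and the stated convention makes the form well defined at $\vec{\bf 0}$; no genuine obstacle arises. Everything else is a routine application of dominated convergence, so there is no substantial difficulty in the argument.
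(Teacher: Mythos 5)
Your argument is correct, but it takes a noticeably different route from the paper for the principal (gradient) term. The paper handles the two pieces $\mathcal{G}_p$ and $\mathcal{H}_{p,\tau}$ asymmetrically: for $\mathcal{G}_p$ it invokes the vector inequalities \eqref{eqn:pl2} (when $p\ge 2$) and \eqref{eqn:pl3} (when $1<p<2$) from Lemma~\ref{lemma-PL} together with H\"older's inequality, which produces an \emph{explicit} bound of the form $C(p)\,|z-w|\,(\cdots)$ or $C(p)\,|z-w|^{p-1}\,(\cdots)$ and hence a quantitative modulus of continuity in $z$; only for $\mathcal{H}_{p,\tau}$ does it appeal to dominated convergence. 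You instead treat both terms uniformly by dominated convergence, after the reduction to the vector-valued map $z\mapsto \int |F+zH|^{p-2}\langle F+zH,H\rangle\,dx$ with $F=\sqrt{Q}\vec{g}$, $H=\sqrt{Q}\vec{h}$ (resp.\ $F=u$, $H=v$). That is a clean and slightly more economical observation — the two terms really do have the same algebraic shape once one passes through $\sqrt{Q}$ — and it avoids the case split on $p\gtrless 2$ entirely. What you give up is the explicit rate; what you gain is a shorter, unified argument. Your pointwise-continuity remark about $w\mapsto |w|^{p-2}w$ at $w=0$ for $1<p<2$ is the right thing to flag, and your dominating function $(|F|+M|H|)^{p-1}|H|\le C(p,M)(|F|^p+|H|^p)\in L^1$ is correctly bounded via Young's inequality, so there is no gap.
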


\begin{proof}
Fix $\uu,\vv\in \widehat{H}^{1,p}_{Q,0}(\Theta)$ as in the
previous lemmas.  For $z\in \mathbb{R}$, let
$z\vv = z(v,\vec{h})=(zv,z\vec{h})\in \widehat{H}^{1,p}_{Q,0}(\Theta)$; we will
show that the function $z \mapsto \ma_{p, \tau}(\uu+z \vv)(\vv)$ is
continuous. By the definition of
$\ma_{p,\tau}$ we can split this mapping into the sum of two parts:
\begin{gather*}
z \mapsto \mathcal{G}_{p}(\uu+z \vv)(\vv) 
= \int_{\Theta} \langle Q (\vec{g} + z\vec{h}), 
(\vec{g} + z\vec{h}) \rangle ^\frac{p-2}{2} \langle Q (\vec{g} +
  z\vec{h}), 
\vec{h} \rangle \mathrm{d}x, \\ 
z \mapsto \mathcal{H}_{p, \tau}(\uu+z \vv)(\vv)  
= \int_{\Theta} \tau |u+ z v|^{p-2} (u+z v) v \mathrm{d}x. 
\end{gather*}
We will show each part is continuous in turn.

To show that the mapping $z\mapsto \mathcal{G}_{p}(\uu+z\vv)(\vv)$ is
continuous, we modify an argument from the proof of
\cite[Proposition~3.15]{CMN}.  Fix $z,\,w \in \mathbb{R}$; then (since
$Q=\sqrt{Q}\sqrt{Q}$ is symmetric),
\begin{align*}
& |\mathcal{G}_p(\uu+z\vv)(\vv)-\mathcal{G}_p(\uu+w\vv)(\vv)| \\
& \quad =
\bigg| \int_\Theta |\sqrt{Q}(\vec{g}+z\vec{h})|^{p-2}
\big\langle \sqrt{Q}(\vec{g}+z\vec{h}),
  \sqrt{Q}\vec{h}\big\rangle\,dx \\
& \quad \qquad \qquad -
\int_\Theta |\sqrt{Q}(\vec{g}+w\vec{h})|^{p-2}
\big\langle \sqrt{Q}(\vec{g}+w\vec{h}), 
  \sqrt{Q}\vec{h}\big\rangle\,dx \bigg|\\
& \quad \leq 
\int_\Theta \big| |\sqrt{Q}(\vec{g}+z\vec{h})|^{p-2}
\sqrt{Q}(\vec{g}+z\vec{h}) 
- |\sqrt{Q}(\vec{g}+w\vec{h}|^{p-2}
\sqrt{Q}(\vec{g}+w\vec{h})\big|\;|\sqrt{Q}\vec{h}|\,dx;
\intertext{if $p\geq 2$, then by \eqref{eqn:pl2} and H\"older's
  inequality with exponent $\frac{p}{p-2}$,}
& \quad \leq C(p)
\int_\Theta \big( |\sqrt{Q}(\vec{g}+z\vec{h}) |^{p-2}
+ |\sqrt{Q}(\vec{g}+w\vec{h})|^{p-2}\big)
|z-w|
|\sqrt{Q}\vec{h}|^2\,dx \\
& \quad \leq C(p)
\bigg(\int_\Theta \big( |\sqrt{Q}(\vec{g}+z\vec{h}) |^{p-2}
+
  |\sqrt{Q}(\vec{g}+w\vec{h})|^{p-2}\big)^{\frac{p}{p-2}}\,dx\bigg)^{\frac{p-2}{p}} \\
& \quad \qquad \qquad \times |z-w|
\bigg(\int_\Theta |\sqrt{Q}\vec{h}|^p\,dx\bigg)^{\frac{2}{p}} \\
& \quad \leq C(p) |z-w|
\big( \|\vec{g}\|_{\mathcal{L}^p_Q(\Theta)}
+(|z|+|w|) \|\vec{h}\|_{\mathcal{L}^p_Q(\Theta)}\big)^{p-2}
  \|\vec{h}\|_{\mathcal{L}^p_Q(\Theta)}^2. 
\end{align*}
Since the norms in the final term are all finite, we see that this
term tends to $0$ as $w\rightarrow z$; thus the mapping
$z\mapsto \mathcal{G}_{p}(\uu+z\vv)(\vv)$ is continuous. when $p\geq
2$.

When $1<p<2$, we can essentially repeat the above argument but instead
apply~\eqref{eqn:pl3} to get that 
\[
|\mathcal{G}_p(\uu+z\vv)(\vv)-\mathcal{G}_p(\uu+w\vv)(\vv)| 
\leq C(p) \int_\Theta |z-w|^{p-1}|\sqrt{Q}\vec{h}|^p\,dx, 
\]
and the desired continuity again follows.
\medskip

To show that the mapping $z \mapsto \mathcal{H}_{p, \tau}(\uu+z
\vv)(\vv) $ is continuous,  again fix $z,\,w \in \mathbb{R}$.  Then
\begin{align*}
\big|\mathcal{H}_{p, \tau}(\uu+z \vv)(\vv)
& -\mathcal{H}_{p, \tau}(\uu+w \vv)(\vv)\big|\\
&= \left| \int_{\Theta} \tau |u+ z v|^{p-2}(u+ z v) 
-\tau |u + w v|^{p-2} (u+ wv) v\,dx\right|\\
&\leq  |\tau|\int_{\Theta}  \left| |u+ z v|^{p-2}(u+ z v)  
-|u + w v|^{p-2} (u+ w v) \right |\,|v|\,dx.
\end{align*}
The integrand in the final term tends to $0$ pointwise as
$w\rightarrow z$, so the desired continuity will follow by the
dominated convergence theorem if we can prove that the integrand is
dominated by an integrable function.   But we have that
\begin{align*}
& \left| |u+ z v|^{p-2}(u+ z v)  
-|u + w v|^{p-2} (u+ w v) \right |\,|v| \\
 &\qquad \qquad \leq |u+ z v|^{p-1}|v| +|u + w v|^{p-1}|v| \\
&\qquad \qquad \leq C(p) (|u|^{p-1}|v| +(|z|^{p-1} + |w|^{p-1})|v|^p)\\
& \qquad \qquad \leq C(p,|z|)(|u|^{p-1}|v| + |v|^p).
\end{align*}
By H\"older's inequality, the final term is in $L^1(\Theta)$.  Hence,
we have that the mapping $z\mapsto \mathcal{H}_{p,\tau}(\uu+z\vv)(\vv)$ is
continuous and this completes the proof.
\end{proof}


\begin{lem}\label{complemma} 
Given  $1<p<\infty$ and $\vphi\in L^{p'}(\Theta)$,  define
$\Gamma$ by \eqref{eqn:Gamma}.  Then for all $\tau>0$,  $\ma_{p,\tau}$
is  almost coercive.
\end{lem}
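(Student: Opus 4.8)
The plan is to estimate $\ma_{p,\tau}\vec{\bf v}(\vec{\bf v})$ from below and $\Gamma(\vec{\bf v})$ from above in terms of $\|\vec{\bf v}\|_{\widehat{H}^{1,p}_{Q,0}(\Theta)}$, and then read off a threshold $\beta$ beyond which the former strictly dominates the latter.

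First I would compute, for $\vec{\bf v}=(v,\vec{h})\in\widehat{H}^{1,p}_{Q,0}(\Theta)$, the pairing $\ma_{p,\tau}\vec{\bf v}(\vec{\bf v})$. Setting $\vec{g}=\vec{h}$ and $u=v$ in \eqref{form} and using the symmetry of $Q$ (so that $\langle Q\vec{h},\vec{h}\rangle=|\sqrt{Q}\vec{h}|^2$, as in the proof of Lemma~\ref{monolemma}), the first integrand collapses to $\langle Q\vec{h},\vec{h}\rangle^{p/2}=|\sqrt{Q}\vec{h}|^p$ and the second to $\tau|v|^{p-2}v\cdot v=\tau|v|^p$. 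Hence
\[
\ma_{p,\tau}\vec{\bf v}(\vec{\bf v})=\int_\Theta |\sqrt{Q}\vec{h}|^p\,dx+\tau\int_\Theta |v|^p\,dx\geq \min\{1,\tau\}\,\|\vec{\bf v}\|_{\widehat{H}^{1,p}_{Q,0}(\Theta)}^{\,p},
\]
since $\|\vec{\bf v}\|_{\widehat{H}^{1,p}_{Q,0}(\Theta)}^{\,p}=\int_\Theta |v|^p\,dx+\int_\Theta |\sqrt{Q}\vec{h}|^p\,dx$. It is precisely the hypothesis $\tau>0$ that makes the zero-order term contribute here, and hence that yields a lower bound by the full norm rather than only by the ``gradient'' part, which may vanish on nonconstant functions by the example in~\cite{FKS}.

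Next I would bound $\Gamma$ using H\"older's inequality:
\[
|\Gamma(\vec{\bf v})|=\left|\int_\Theta \vphi v\,dx\right|\leq \|\vphi\|_{L^{p'}(\Theta)}\,\|v\|_{L^p(\Theta)}\leq \|\vphi\|_{L^{p'}(\Theta)}\,\|\vec{\bf v}\|_{\widehat{H}^{1,p}_{Q,0}(\Theta)}.
\]
Combining the two estimates, whenever $\|\vec{\bf v}\|_{\widehat{H}^{1,p}_{Q,0}(\Theta)}=:t>0$ we get
\[
\ma_{p,\tau}\vec{\bf v}(\vec{\bf v})-\Gamma(\vec{\bf v})\geq \min\{1,\tau\}\,t^{p}-\|\vphi\|_{L^{p'}(\Theta)}\,t=t\big(\min\{1,\tau\}\,t^{p-1}-\|\vphi\|_{L^{p'}(\Theta)}\big),
\]
which is strictly positive once $t^{p-1}>\|\vphi\|_{L^{p'}(\Theta)}/\min\{1,\tau\}$. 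Since $p>1$, it suffices to take
\[
\beta:=\left(\frac{1+\|\vphi\|_{L^{p'}(\Theta)}}{\min\{1,\tau\}}\right)^{\frac{1}{p-1}},
\]
and then $\ma_{p,\tau}\vec{\bf v}(\vec{\bf v})>\Gamma(\vec{\bf v})$ for all $\vec{\bf v}$ with $\|\vec{\bf v}\|_{\widehat{H}^{1,p}_{Q,0}(\Theta)}>\beta$, which is exactly almost coercivity.

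I do not expect any real obstacle: the whole argument is a short direct computation. The only bookkeeping points are the identity $\langle Q\vec{h},\vec{h}\rangle^{p/2}=|\sqrt{Q}\vec{h}|^p$ (symmetry of $Q$), the harmlessness of the convention $\ma_{p,\tau}\vec{\bf 0}(\cdot)=0$ for $1<p<2$ (almost coercivity only concerns $\vec{\bf v}$ with norm exceeding $\beta>0$), and keeping track of why $\tau>0$ is indispensable.
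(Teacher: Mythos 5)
Your proof is correct and follows essentially the same route as the paper's: both evaluate $\ma_{p,\tau}\vec{\bf v}(\vec{\bf v})$ directly to obtain the lower bound $\min\{1,\tau\}\,\|\vec{\bf v}\|^p_{\widehat{H}^{1,p}_{Q,0}(\Theta)}$, bound $|\Gamma(\vec{\bf v})|$ by H\"older, and pick $\beta$ of the form $(\,\cdot\,/\min\{1,\tau\})^{1/(p-1)}$ (the paper writes the exponent as $p'-1$, which is the same thing; your extra ``$+1$'' is harmless). The only differences are cosmetic.
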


\begin{proof}
  Let $\uu=(u,\vec{g})\in \widehat{H}^1_{Q,0}(\Theta)$ and $\vphi\in
  L^{p'}(\Theta)$.  Then we have that
\begin{multline*}
\ma_{p, \tau}\uu(\uu) 
= \int_{\Theta} \langle Q \vec{g}, \vec{g} \rangle ^\frac{p-2}{2}
\langle Q\vec{g}, \vec{g} \rangle \,dx + \tau \int_{\Theta}
|u|^{p-2} u^2 \,dx  \\
= \int_{\Theta} \langle Q \vec{g}, \vec{g} \rangle
^\frac{p}{2}\,dx 
+ \tau \int_{\Theta}  |u|^p \,dx
\geq\eta\bigg( \int_{\Theta} \langle Q \vec{g}, \vec{g} \rangle ^\frac{p}{2}\,dx +  \int_{\Theta}  |u|^p \,dx\bigg)
= \eta \|\uu\|_{\widehat{H}^{1,p}_{Q,0}}^p,
\end{multline*}
where $\eta = \min\{1,\tau\}>0$. 
Therefore, if we let $\beta=
\big(\eta^{-1}\|\vphi\|_{p'}\big)^{p'-1}$, then by H\"older's inequality we have that
for all $\|\uu\|_{\widehat{H}^{1,p}_{Q,0}(\Theta)}>\beta$,
\[ 
|\Gamma(v)| 
= \bigg|\int_{\Theta} \vphi u \,dx\bigg| 
\leq \|\vphi\|_{p'} \|u\|_p
\leq \|\vphi\|_{p'} \|\uu\|_{\widehat{H}^{1,p}_{Q,0}(\Theta)}
< \eta \|\uu\|_{\widehat{H}^{1,p}_{Q,0}(\Theta)}^p \leq 
 \ma_{p, \tau}\uu(\uu). \]
Thus, $\ma_{p,\tau}$
is  almost coercive.
\end{proof}


\section{Boundedness of solutions to degenerate 
$p$-Laplacians} 
\label{section:bdd-solns}

In this section we will prove that solutions of the Dirichlet
problem~\eqref{eqn:dirichlet} are bounded.  The proof is quite
technical, as it relies on a very general result from~\cite{MRW} and
much of the work in the proof is checking the hypotheses.

\begin{pro} \label{tauindep} 
  Given a set $\Omega \subset \R^n$, let $\rho$ be a quasi-metric on
$\Omega$.    Fix $1<p<\infty$ and $1<t\leq\infty$, and suppose $Q$ is
a semi-definite matrix function such that 
  $Q\in L^{\frac{p}{2}}_{\emph{loc}}(\Omega)$.  Suppose further that 
  that the cutoff condition of order $s>p\sigma'$, the local
  Poincar\'e property of order $p$ with gain $t'=\frac{s}{s-p}$, and
  the local Sobolev property of order $p$ with gain 
  $\sigma\geq 1$ hold. 
Given any  $\Theta\Subset\Omega$ and $q\in
[p',\infty)\cap(p\sigma',\infty)$, if $\vphi\in L^q(\Theta)$, 
  then there exists a positive
  constant $C$ such that for all $\tau\in (0,1)$, the corresponding
  weak solution
  $\vec{\bf u}_\tau=(u_\tau,\vec{g}_\tau)\in \widehat{H}^1_{Q,0}(\Theta)$ of
  \eqref{eqn:dirichlet} satisfies
\begin{equation} \label{eqn:tauindep1}
\esup_{x\in \Theta} |u_\tau(x)| \leq
  C\|\vphi\|_{L^q(\Theta)}^\frac{1}{p-1}. 
\end{equation}
The constant  $C$ is independent
  of $\vphi$, $\vec{\bf u}_\tau$, and $\tau$.
\end{pro}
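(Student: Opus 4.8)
The plan is to obtain~\eqref{eqn:tauindep1} from the general boundedness theorem for weak subsolutions of degenerate quasilinear equations proved in~\cite{MRW}. Its hypotheses are precisely the conditions standing in this section --- the local Sobolev property of gain $\sigma$, the local Poincar\'e property of gain $t'=\tfrac{s}{s-p}$, the cutoff condition of order $s>p\sigma'$, and $Q\in L^{p/2}_{\mathrm{loc}}(\Omega)$ --- together with the restriction $q\in[p',\infty)\cap(p\sigma',\infty)$ on the datum, which is exactly the subcritical range in which the De Giorgi/Moser iteration underlying that theorem closes and produces the homogeneity $\tfrac1{p-1}$. The single non-routine point is that the weak solution $\vec{\bf u}_\tau$ of~\eqref{eqn:dirichlet} from Proposition~\ref{existence} is a weak subsolution of the \emph{purely second-order} degenerate $p$-Laplacian inequality, with the zero-order term deleted and with right-hand side $\varphi$ \emph{alone}; once this is known, the theorem of~\cite{MRW} is applied to data in which $\tau$ does not appear, so the resulting constant is automatically independent of $\tau$. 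Everything else is bookkeeping: verifying the subsolution property and matching hypotheses.

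First I would record the subsolution (Caccioppoli) inequality. Fix $k>0$ and test~\eqref{weaksol} with $v=(u_\tau-k)_+$. This is legitimate: $(u_\tau-k)_+$ is a Lipschitz truncation of $u_\tau$, so it lies in $\widehat{H}^{1,p}_{Q,0}(\Theta)$ and its representative pair has gradient component $\vec{g}_\tau\,\mathbbm{1}_{\{u_\tau>k\}}$ --- a fact one must justify through the chain/product rule for the degenerate Sobolev space, since $Q$ may annihilate gradients (cf.\ Lemma~\ref{prodlem}) --- and hence it is an admissible test element by the remark following Definition~\ref{weaksoldef}. On $A_k=\{u_\tau>k\}$ we have $u_\tau>0$, so $|u_\tau|^{p-2}u_\tau\,(u_\tau-k)_+\ge 0$, and therefore
\begin{multline*}
\int_\Theta \big|\sqrt{Q}\,\nabla(u_\tau-k)_+\big|^p\,dx
= -\int_\Theta \varphi\,(u_\tau-k)_+\,dx
-\tau\int_\Theta |u_\tau|^{p-2}u_\tau\,(u_\tau-k)_+\,dx\\
\le \int_\Theta |\varphi|\,(u_\tau-k)_+\,dx .
\end{multline*}
This is a Caccioppoli inequality for the degenerate $p$-Laplacian in which $\tau$ has vanished. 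For the lower bound I would note that $-\vec{\bf u}_\tau=(-u_\tau,-\vec{g}_\tau)$ is itself the weak solution of~\eqref{eqn:dirichlet} with $\varphi$ replaced by $-\varphi$ (the nonlinearity $s\mapsto|s|^{p-2}s$ is odd), so the same estimate applied to $-\vec{\bf u}_\tau$ controls $\esup_\Theta(-u_\tau)$; since $\|\varphi\|_{L^q}=\|-\varphi\|_{L^q}$, the two bounds combine to give~\eqref{eqn:tauindep1}.

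With this $\tau$-free Caccioppoli inequality in hand, the rest is a term-by-term check that $\vec{\bf u}_\tau$ and the datum $\varphi\in L^q(\Theta)$ satisfy the standing assumptions of the boundedness theorem in~\cite{MRW}: the structural growth and ellipticity conditions hold trivially here, since the principal part is exactly $\big|\sqrt{Q}\nabla u\big|^{p-2}Q\nabla u$; the membership $\vec{\bf u}_\tau\in\widehat{H}^{1,p}_{Q,0}(\Theta)$ supplies both the zero boundary values and the a priori energy bound needed to initiate the iteration; $\Theta$ is bounded with $\Theta\Subset\Omega$; the collection of $\rho$-balls is locally doubling by Remark~\ref{kmr}/\cite{KMR}, since the local Sobolev property and the cutoff condition both hold; and $q$ lies in the admissible range $[p',\infty)\cap(p\sigma',\infty)$. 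Feeding this into~\cite{MRW} yields $\esup_\Theta u_\tau\le C\,\|\varphi\|_{L^q(\Theta)}^{1/(p-1)}$ with $C$ depending only on $\Theta$, $p$, $q$, $\sigma$, $s$, the structural constants, and the norm of $Q$ on a neighbourhood of $\bar\Theta$, but not on $\tau$ --- precisely because, as shown above, $\tau$ enters neither the datum nor the Caccioppoli inequality that the theorem consumes.

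I expect two points to need the most care. The first, and the genuine obstacle, is making the $\tau$-independence airtight: one must be sure that \emph{every} quantity appearing on the right of the iteration is bounded uniformly for $\tau\in(0,1)$ --- in particular any term of the form $\|u_\tau\|_{L^p(\Theta)}$, which is controlled by combining the energy identity $\|\sqrt{Q}\nabla u_\tau\|_{L^p(\Theta)}^p+\tau\|u_\tau\|_{L^p(\Theta)}^p=-\int_\Theta\varphi u_\tau\,dx$ (test~\eqref{weaksol} with $v=u_\tau$, as in Lemma~\ref{complemma}) with the global estimates for $\widehat{H}^{1,p}_{Q,0}(\Theta)$ supplied by our hypotheses, e.g.\ via the compact embedding $\widehat{H}^{1,p}_{Q,0}(\Theta)\hookrightarrow L^p(\Theta)$ furnished by~\eqref{localpc}. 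The second is the technical justification, used repeatedly, that truncations such as $(u_\tau-k)_+$, and their products with the cutoff functions $\psi_j$ of~\eqref{cutoff}, genuinely lie in the degenerate spaces with the expected representative pairs --- there is no classical chain rule since $Q$ may be singular --- which is exactly why the product rule of Lemma~\ref{prodlem} and the cutoff condition are built into the hypotheses.
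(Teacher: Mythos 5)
Your Caccioppoli observation — that the zero-order term has the right sign on the set $\{u_\tau>k\}$ so that testing with $(u_\tau-k)_+$ makes $\tau$ drop out — is a pleasant simplification, but it does not actually resolve the hard part of the proof, and your treatment of that part has a genuine gap.

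Applying \cite[Theorem~1.2]{MRW} (whether to the $\tau$-free subsolution inequality you derive, or, as the paper does, to the full equation with structural coefficients made $\tau$-free simply by using $\tau<1$ to bound $|B(x,z,\xi)|\le|\varphi|+|z|^{p-1}$) produces a \emph{local} $L^\infty$ bound whose right-hand side is a local $L^{s^*p}$ average of $|u_\tau|+k$ over a ball. Dominating that average via the local Sobolev inequality and covering $\bar\Theta$ yields an estimate of the form
\[
\esup_{\Theta}|u_\tau| \;\le\; C\Big[\|\vec{\bf u}_\tau\|_{\widehat H^{1,p}_{Q,0}(\Theta)} + \|\varphi\|_{L^q(\Theta)}^{1/(p-1)}\Big],
\]
with $C$ independent of $\tau$. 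The crux is thus to show that $\|\vec{\bf u}_\tau\|_{\widehat H^{1,p}_{Q,0}(\Theta)}$ is bounded uniformly in $\tau\in(0,1)$, and your $\tau$-free Caccioppoli inequality does nothing to help here: the solution itself reappears on the right of the iteration, so having $\tau$-free structural constants is necessary but not sufficient.

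You recognise this as ``the genuine obstacle,'' but the fix you sketch — ``combining the energy identity $\|\sqrt Q\nabla u_\tau\|_p^p+\tau\|u_\tau\|_p^p=-\int\varphi u_\tau$ with the compact embedding'' — does not close the gap. The energy identity gives $\|u_\tau\|_{L^p}\le(\tau^{-1}\|\varphi\|_{p'})^{1/(p-1)}$, which blows up as $\tau\to0$, and compactness of $\widehat H^{1,p}_{Q,0}(\Theta)\hookrightarrow L^p(\Theta)$ does not directly furnish any a priori bound. The paper's proof obtains the uniform bound by an indirect compactness argument: assume $\|\vec{\bf u}_{\tau_k}\|\to\infty$, normalize, pass to a strongly convergent subsequence via Proposition~\ref{CRWpro}, use a Caccioppoli estimate to show the gradient component tends to zero, invoke the local Poincar\'e inequality to conclude the limit is constant, and then extend to $\Omega$ and use connectedness to force the constant to be $0$, contradicting the unit normalization. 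This contradiction argument is the core of the proposition and is absent from your proposal. Without it (or some explicit substitute), the claim that $C$ is independent of $\tau$ is not established.

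A smaller point: $u_\tau$ itself need not be nonnegative, so it is not a subsolution of the $\tau$-free equation on $\Theta$; only $(u_\tau-k)_+$ for $k\ge0$ has the favourable sign, which you do use, but your opening summary overstates the reduction.
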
    

\begin{rem}
The hypotheses of Proposition~\ref{tauindep} are the same as those of
Theorem~\ref{mainlocglob} except that we do not require higher
integrability on $Q$: $Q\in L^{\frac{p}{2}}_{\emph{loc}}(\Omega)$ is
sufficient for this result.
\end{rem}

The proof of Proposition~\ref{tauindep} requires that the mapping
$I:\widehat{H}^{1,p}_{Q,0}(\Theta) \ra L^p(\Theta)$,
$I((u,\vec{g})) = u$, is compact.   This is a consequence of the
following result.  

\begin{pro}\label{CRWpro}
Given a set $\Omega \subset \R^n$, let $\rho$ be a quasi-metric on
$\Omega$.    Fix $1<p<\infty$ and $1<t\leq\infty$, and suppose $Q$ is
a semi-definite matrix function such that 
  $Q\in L^{\frac{p}{2}}_{\emph{loc}}(\Omega)$.  Suppose further that 
  that the cutoff condition of order $s>p\sigma'$, the local
  Poincar\'e property of order $p$ with gain $t'=\frac{s}{s-p}$, and
  the local Sobolev property of order $p$ with gain 
  $\sigma\geq 1$ hold. 
  Fix $\Theta\Subset\Omega$; then the mapping
  $I:\widehat{H}^{1,p}_{Q,0}(\Theta) \ra L^p(\Theta)$,
  $I((u,\vec{g}))=u$, is compact.
\end{pro}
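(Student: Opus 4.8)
The plan is to establish compactness of $I$ in two stages: first prove that a bounded sequence in $\widehat{H}^{1,p}_{Q,0}(\Theta)$ has a subsequence that is Cauchy in $L^p$ on each sufficiently small $\rho$-ball, and then use a finite covering argument to upgrade this to a subsequence Cauchy in $L^p(\Theta)$. Let $\{\vec{\bf u}_k\} = \{(u_k,\vec{g}_k)\}$ be a bounded sequence in $\widehat{H}^{1,p}_{Q,0}(\Theta)$, say with norm bounded by $M$. Since $\widehat{H}^{1,p}_{Q,0}(\Theta)$ is reflexive (Lemma~\ref{lemma:bfs}), after passing to a subsequence we may assume $\vec{\bf u}_k \rightharpoonup \vec{\bf u} = (u,\vec{g})$ weakly; replacing $\vec{\bf u}_k$ by $\vec{\bf u}_k - \vec{\bf u}$, it suffices to show that a weakly null bounded sequence has a subsequence with $u_k \to 0$ strongly in $L^p(\Theta)$.

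The local estimate is where the hypotheses enter. Fix $x \in \Theta$ and a radius $r$ with $0 < \beta r < r_1(x)$, and consider the ball $B = B(x,r)$. On such a ball I would apply the local Poincar\'e inequality~\eqref{localpc} of order $p$ with gain $t' = \tfrac{s}{s-p}$ to control $\|u_k - (u_k)_B\|_{L^{pt'}(B)}$ by $C r \|\sqrt{Q}\nabla u_k\|_{L^p(B(x,\beta r))} \le C r M$, which is uniformly bounded. Since $pt' > p$, this gives uniform boundedness of the $u_k$ in $L^{pt'}(B)$ modulo constants; combined with a uniform bound on the averages $(u_k)_B$ (which follows from $\|u_k\|_{L^p(\Theta)} \le M$ and H\"older), we get a uniform $L^{pt'}(B)$ bound. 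Higher integrability plus convergence in a weaker topology yields convergence in $L^p(B)$: specifically, weak-$L^p$ convergence (or convergence in measure) together with a uniform bound in $L^{pt'}$, $t' > 1$, gives strong $L^p(B)$ convergence by a standard interpolation/equi-integrability argument (Vitali's theorem). To get convergence in measure I would use that weak convergence in $\widehat{H}^{1,p}_{Q,0}(\Theta)$ forces $u_k \rightharpoonup 0$ weakly in $L^p(\Theta)$, and then combine the uniform $L^{pt'}(B)$ bounds with a diagonal argument over a countable dense family of balls. The doubling property~\eqref{eqn:kmr-doubling} from Remark~\ref{kmr} and the equivalence~\eqref{eqn:cont-metric} of the $\rho$-topology with the Euclidean one guarantee that $\Theta$ (being compactly contained in $\Omega$) can be covered by finitely many such balls $B(x_i, r_i)$ with $\beta r_i < r_1(x_i)$, using the continuity and positivity of $r_1$ and the compactness of $\bar\Theta$.

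The main obstacle, I expect, is the transition from "bounded in $\widehat{H}^{1,p}_{Q,0}$" to "convergent in measure on each small ball," since $\widehat{H}^{1,p}_{Q,0}(\Theta)$ is not a space of functions: the component $u_k$ is genuinely a function in $L^p(\Theta)$, but its relation to $\vec{g}_k$ is only through the abstract closure, so one cannot directly invoke classical Rellich–Kondrachov. The resolution is precisely the role of the Poincar\'e inequality~\eqref{localpc}: it provides an a priori bound on $u_k$ in the strictly-better space $L^{pt'}$ on small balls purely in terms of the norm, which is the only ingredient beyond the $L^p$ bound that one needs for Vitali's criterion. The cutoff condition of order $s > p\sigma'$ and the constraint $t' = \tfrac{s}{s-p}$ enter through the hypothesis that~\eqref{localpc} is available with this particular gain, and through ensuring $Q \in L^{p/2}_{\mathrm{loc}}$ is enough for the Poincar\'e test functions to make sense; I would cite the relevant lemmas establishing~\eqref{localpc} and the product rule rather than reprove them. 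Once the local $L^p$-Cauchy property is in hand, summing over the finite cover and extracting a single diagonal subsequence completes the proof that $I$ is compact.
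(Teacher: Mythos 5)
The paper's own proof is short and structurally quite different from yours: it simply observes that Proposition~\ref{CRWpro} is a special case of the compact embedding theorem~[CRW, Theorem~3.20], and then verifies, item by item, that the hypotheses of that theorem hold here (the quasi-metric topology and condition~\eqref{eqn:cont-metric}, local doubling from Remark~\ref{kmr}, the Poincar\'e and Sobolev properties, the cutoff condition, and the weak Sobolev inequality of~[CRW, (3.33)] with $t'=\frac{s}{s-p}$). Your proposal instead attempts a self-contained Rellich--Kondrachov-style argument, which is a legitimate thing to try, but as written it has a real gap at the crucial step.

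The gap is in the passage from "weakly $L^p$-convergent and uniformly bounded in $L^{pt'}(B)$" to "strongly $L^p(B)$-convergent." You assert that weak $L^p$ convergence together with a uniform $L^{pt'}$ bound, $t'>1$, yields strong $L^p$ convergence by Vitali's theorem. This is false: Vitali requires convergence in measure (equivalently, a.e. convergence along a subsequence), not weak convergence. The sequence $u_k(x)=\sin(kx)$ on $(0,1)$ is uniformly bounded (so bounded in every $L^q$) and converges weakly to $0$ in every $L^p$, yet does not converge in measure. You then acknowledge convergence in measure is needed, but the mechanism you offer --- combining weak $L^p$ convergence with the $L^{pt'}(B)$ bounds and a diagonal argument over a countable family of balls --- does not produce convergence in measure; higher integrability controls equi-integrability but supplies no oscillation control. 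What is actually required is the Fr\'echet--Kolmogorov-type ingredient: the Poincar\'e inequality bounds $\|u_k-(u_k)_{B(x,r)}\|_{L^p(B(x,r))}\le CrM$ \emph{uniformly in $k$}, so that as $r\to 0$ each $u_k$ is uniformly well-approximated by a simple function built from the finitely many ball-averages in a fixed cover; this yields total boundedness of $\{u_k\}$ in $L^p(\Theta)$ directly, without ever invoking weak convergence or Vitali. That finite-dimensional approximation step is exactly the content of~[CRW, Theorem~3.20], and it is the piece missing from your sketch. Two smaller issues: the Poincar\'e inequality~\eqref{localpc} is stated for Lipschitz functions and must be transferred to abstract pairs $(u_k,\vec{g}_k)$ by density before it can be applied; and the gradient term in your estimate should be $\|\sqrt{Q}\,\vec{g}_k\|_{L^p}$ rather than $\|\sqrt{Q}\nabla u_k\|_{L^p}$, since $u_k$ alone has no canonical $Q$-gradient in this degenerate setting.
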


\begin{proof}
  Proposition \ref{CRWpro} is a particular case of a general
  imbedding result from~\cite[Theorem~3.20]{CRW}.    So to prove it we
  only need to show that the hypotheses of this result are satisfied.
  We will go through these in turn but for brevity we have omitted
  restating the precise form of each hypothesis as given there and
  instead refer to them as they are stated in the theorem
  and the preliminaries in~\cite[Section~3]{CRW}.  We
  refer the reader to this paper for complete details. 

Since $(\Omega,\rho)$ is a quasi-metric space, $\rho$ satisfies
\eqref{eqn:cont-metric}, and by Remark~\ref{kmr} Lebesgue measure
satsfies a local doubling property for metric balls, the topological
assumptions of Section~3A and condition (3-12) in~\cite{CRW} hold.  In
particular, since we assume that the function $r_1$ in
Remark~\ref{kmr} is continuous, the local geometric doubling condition
in~\cite[Definition~3.3]{CRW} holds.  

In the definition of the underlying Sobolev spaces, and in the
Poincar\'e and Sobolev inequalities~\cite[Definitions~3.5, 3.16]{CRW}, we let the
measures $\mu,\,\nu,\,\omega$ all be the Lebesgue measure.  Since the
local Poincar\'e inequality of order $p$, Defintion \ref{pcprop},
holds, and since $r_1$ is assumed to be continuous, \cite[Definition
3.5]{CRW} holds.  (See also \cite[Remark 3.6]{CRW}.)  Similarly, since
we assume that the local Sobolev inequality of order $p$ with gain
$\sigma$, Definition~\eqref{localsob}, holds, \cite[Definition 3.16]{CRW} holds.

The existence of an accumulating sequence of cut-off functions,
Definition~\ref{aslcof}, lets us prove the cut-off property of order
$s\geq p\sigma'$ in \cite[Definition 3.18]{CRW}.  Fix a compact subset $K$ of $\Omega$
and let $R>0$ be the minimum of $r_1$ on $K$.  Given $x\in K$
and $0<r< R$,   since $r<r_1(x)$, the cutoff condition of order
$s\geq p\sigma'$ gives $\psi_1\in Lip_0(B(x,r))$ such that
\begin{enumerate}
\item $0\leq \psi_1(x)\leq 1$,
\item $\psi_1(x)=1$ on $B(y,\alpha r)$, 
\item $\nabla \psi_1 \in L^s_Q(\Omega)$.
\end{enumerate}
This yields the desired function in~\cite[Definition 3.18]{CRW}.  

Finally, we show that the weak Sobolev inequality, \cite[ Inequality
(3.33)]{CRW}, holds with $t'=\frac{s}{s-p}$.  Since
$s\geq p\sigma'$, $t=\frac{s}{p} \geq \sigma'$,  and so
$1<t'\leq\sigma$.  Again let $K$ be  a compact subset of $\Omega$
and $R>0$ the minimum value of $r_1$ on $K$.  If
$x\in K$ and $0<r<R$, then by the local Sobolev property with
$B=B(x,r)$ we have that for any $u\in Lip_0(B)$,
\[ \bigg(\int_{B}|u|^{pt'}dy\bigg)^\frac{1}{pt'} \leq
  C(B)\bigg(\int_{B}|u|^{p\sigma}dy\bigg)^\frac{1}{p\sigma} \leq
  C(B)\|(u,\nabla u)\|_{\widehat{H}^{1,p}_{Q,0}(\Omega)}, \]
which gives us inequality (3.33).

Thus, we have shown that we satisfy the necessary hypotheses, and so
Proposition~\ref{CRWpro} follows from~\cite[Theorem 3.20]{CRW}.
\end{proof}

\medskip

\begin{proof}[Proof of Proposition~\ref{tauindep}]
  Let $\Theta\Subset\Omega$ and
  $q\in [p',\infty)\cap(p\sigma',\infty)$ with $\vphi\in L^q(\Theta)$.
  Note that since $q\geq p'$ and $\Theta$ is bounded, it follows that
  $\vphi\in L^{p'}(\Theta)$.  Fix $0<\tau<1$; then by
  Proposition~\ref{existence} there exists a weak solution
  $\uu_\tau = (u_\tau,\vec{g}_\tau)\in
  \widehat{H}^{1,p}_{Q,0}(\Theta)$ of the Dirichlet
  problem~\eqref{eqn:dirichlet}.  To complete the proof, we will first use~\cite[Theorem~1.2]{MRW}
to show that $\uu_\tau$ satisfies~\eqref{eqn:tauindep1}.  Then we will
show using Proposition~\ref{CRWpro} that the constant is independent
of $\tau$.  (By \cite[Theorem~1.2]{MRW} we have that it is independent
of $\varphi$ and $\uu_\tau$.)

To apply~\cite[Theorem~1.2]{MRW}, first note that $(\Omega,\rho)$ is a
quasi-metric space and~\eqref{eqn:cont-metric} holds, we satisfy the
topological assumptions of this paper, including~\cite[(1.9)]{MRW}. As a result, if
$0< \beta r<r_1(y)$, the local Sobolev condition,
Definition~\ref{sobprop}, the Poincar\'e inequality,
Definition~\ref{pcprop}, and the cutoff condition,
Definition~\ref{aslcof}, hold. This shows that assumptions~\cite[(1.13), (1.14), (1.15),
(1.16)]{MRW} hold with $t'= \frac{s}{s-p}$.  (For condition (1.16),
see also \cite[Remark~1.1]{MRW}.)

We now show that $\uu_\tau$ is the
solution of an equation with the appropriate properties.   Define 
  $A,\tilde{A}:\Theta\times \mathbb{R}\times\mathbb{R}^n \ra
  \mathbb{R}^n$, and
  $B:\Theta\times \mathbb{R}\times\mathbb{R}^n\ra \mathbb{R}$ by
\begin{enumerate}

\item $A(x,z,\xi) = {\lan}Q(x)\xi,\xi{\ran}^\frac{p-2}{2}Q(x)\;\xi$,

\item $\tilde{A}(x,z,\xi)={\lan}Q(x)\xi,\xi{\ran}^\frac{p-2}{2}\sqrt{Q(x)}\;\xi $,

\item $B(x,z,\xi)=\vphi(x) + \tau|z|^{p-2}z \label{struct3}$.
\end{enumerate}
Given these functions, the differential
equation~\eqref{eqn:p-laplacian} can be rewritten as
\[   \text{div}\big(A(x,u,\nabla u)\big) = B(x,u,\nabla u),  \]
which is \cite[(1.1)]{MRW}.
  Furthermore, we have that  $A(x,z,\xi) = \sqrt{Q(x)}\, \tilde{A}(x,z,\xi)$, and for
  all $(z,\xi)\in \mathbb{R}\times\mathbb{R}^n$ and a.e.~$x\in\Theta$,
\begin{enumerate}
\item $\xi\cdot A(x,z,\xi)  =
  \;{\lan}Q(x)\;\xi,\xi{\ran}^\frac{p-2}{2}{\lan}Q(x)\;\xi,\xi{\ran} = |\sqrt{Q}\;\xi|^p$,

\item  $|\tilde{A}(x,z,\xi)|  = |\sqrt{Q(x)}\;\xi|^{p-1}$,

\item $|B(x,z,\xi)| \leq |\vphi(x)| + \tau|z|^{p-1}<  |\vphi(x)| + |z|^{p-1}$.

\end{enumerate}
Therefore, the structural conditions 
\cite[(1.3)]{MRW} are satisfied with
 the exponents $\delta=\gamma=\psi=p$ and the coefficients
$a=1,\;b=0,\;c=0,\;d=1,\;e=0,\;f=|\vphi|, \;g=0$, and $h=0$.  

The above shows that we satisfy the hypotheses
of~\cite[Theorem 1.2]{MRW}.  Therefore,   fix $\epsilon\in(0,1]$
such that $p-\epsilon >1$ and for $k>0$ (to be fixed below) set $\overline{u}_\tau =
|u_\tau|+k$.   Then for each $y\in\Theta$ and $0<\beta r< r_1(y)$, we have the $L^\infty$-estimate
\begin{equation}\label{localbound}
\esup_{x\in B(y,\alpha r)}|\overline{u}_\tau(x)| 
\leq C Z\bigg[ \frac{1}{|B(y,r)|}\int_{B(y,r)} |\overline{u}_\tau|^{s^*p}\,dx\bigg]^\frac{1}{s^*p},
\end{equation}
where $s^*$ is the dual exponent of $s_1 >\sigma'$, define by
$s=s_1p$.  (In~\eqref{localbound}, $\alpha$ is the constant
from~\eqref{cutoff}; note that in \cite{MRW} it is called $\tau$.)
The term $Z$  on the right-hand side is defined by
\[ Z =
  \left[1+\left(r^p|B(y,r)|^{-\frac{p-\epsilon}{p\sigma'}}
\|1+k^{1-p}|\varphi|\|_{L^\frac{p\sigma'}{p-\epsilon}(B(y,r))}\right)
^\frac{1}{\epsilon}\right]^\frac{s^*}{\sigma-s^*}. 
\]
Since $\frac{p\sigma'}{p-\epsilon}<q$ and $\varphi \in L^q(\Theta)$,
$Z$ is bounded with a bound independent of $\tau$ for any
$k>0$ but depending on the ball $B(y,r)$.

If
$\varphi\neq 0$, let
$k = \|\varphi\|_{L^q(\Theta)}^\frac{1}{p-1}>0$; then by  Minkowski's
inequality and the local Sobolev inequality~\eqref{localsob}, ~\eqref{localbound} becomes
\begin{align*}
\esup_{x\in B(y,\alpha r)}|u_\tau(x)| 
& \leq C\left[\left(\frac{1}{|B(y,r)|}\int_{B(y,r)}
    |u_\tau|^{p\sigma}\;dx\right)^\frac{1}{p\sigma} 
+ \|\varphi\|_{L^q(\Theta)}^\frac{1}{p-1}\right] \\
& \leq
C\left[\frac{1}{|B(y,r)|^\frac{1}{p}}\|\uu_\tau\|_{\widehat{H}^{1,p}_{Q,0}(\Theta)} 
+ \|\varphi\|_{L^q(\Theta)}^\frac{1}{p-1}\right].
\end{align*}
The constant $C$ depends on $B(y,r)$ but not on $\tau$ or $\varphi$.
The case when $\varphi = 0$ is similar and left to the reader.

We now extend this estimate to all of $\Theta$ using the fact that
$\overline{\Theta}$ is compact.  By assumption~\eqref{eqn:cont-metric}, the balls
$B(y,r)$ are open, so we can find a finite cover $B_j =
\tilde{B}(y_j,r_j)$, $1\leq j \leq N$, with  $0<\beta r_j<r_1(y_j)$.  Hence, we have that
\begin{equation}\label{globalbound}
\disp\esup_{y\in \Theta} |u_\tau(x)| 
\leq C_0\left[\|\uu_\tau\|_{\widehat{H}^{1,p}_{Q,0}(\Theta)} +
  \|\varphi\|_{L^q(\Theta)}^\frac{1}{p-1}\right], 
\end{equation}
where the constant $C_0$ depends on $\min\{ r_j : 1\leq j \leq N \}>0$
but not on $\tau$ or $\varphi$.

\medskip

To complete the proof we will show that 
\begin{equation}\label{tauindepgoal} 
\|\uu_\tau\|_{\widehat{H}^{1,p}_{Q,0}(\Theta)}\leq C\|\vphi\|_{L^q(\Theta)}^\frac{1}{p-1}
\end{equation}
with a constant $C$ independent of $\tau$.  To do so we will use
Proposition~\ref{CRWpro}.  Suppose to the contrary
that~\eqref{tauindepgoal} is false.  Then there exists a sequence
$\{\tau_k\}\subset (0,1)$ and corresponding sequence of weak solutions
$\{\uu_{\tau_k} \}=\{u_{\tau_k},\vec{g}_{\tau_k}\} \subset
\widehat{H}^{1,p}_{Q,0}(\Theta)$ of~\eqref{eqn:dirichlet} such that 
\[ \|\uu_\tau \|_{\widehat{H}^{1,p}(\Theta)} \ra \infty  \]
as $k\ra\infty$.   We must have that
$ \tau_k \rightarrow 0$ as $k\rightarrow \infty$.  To see this, note that since
$\uu_{\tau_k}$ is a valid test function in the definition of a weak
solution, we have that
\begin{multline*}
 \tau_k \|\uu_{\tau_k}\|_{\widehat{H}^{1,p}(\Theta)}^p 
= \tau_k\bigg[ \int_\Theta |u_{\tau_k}|^{p-2}u_{\tau_k} u_{\tau_k}\,dx 
+ \int_\Theta \lan
  Q\vec{g}_{\tau_k},\vec{g}_{\tau_k}\ran^\frac{p-2}{2}\lan
  Q\vec{g}_{\tau_k},\vec{g}_{\tau_k}\ran \,dx\bigg] \\
\leq \bigg|\ma_{p,\tau_k}\uu_{\tau_k} (\uu_{\tau_k})\bigg|
= \bigg|\int_\Theta u_{\tau_k} \vphi \,dx\bigg| \leq
\|\uu_{\tau_k}\|_{\widehat{H}^{1,p}(\Theta)}
\|\vphi\|_{L^{p'}(\Theta)}.
\end{multline*}
Since 
$\|\uu_{\tau_k}\|_{\widehat{H}^{1,p}(\Theta)}\neq 0$,this inequality
implies
that 
\[  \|\uu_{\tau_k}\|_{\widehat{H}^{1,p}(\Theta)} \leq
\bigg(\frac{1}{\tau_k}\|\vphi\|_{L^{p'}(\Theta)} \bigg)^\frac{1}{p-1}
\]
which in turn implies that $\tau_k\rightarrow 0$.

For each $k\in \mathbb{N}$  define $\vv_{\tau_k}
=\|\uu_{\tau_k}\|_{\widehat{H}^{1,p}_{Q,0}(\Theta)}^{-1}\uu_{\tau_k}$.
Then, $\vv_{\tau_k}=(v_{\tau_k},\vec{h}_{\tau_k})\in
\widehat{H}^{1,p}_{Q,0}(\Theta)$,
$\|\vv_{\tau_k}\|_{\widehat{H}^{1,p}_{Q,0}(\Theta)} =1$, and
$\vv_{\tau_k}$ is a weak solution of the Dirichlet problem
\[ 
\begin{cases}
\text{div}\big({\lan}Q\nabla w,\nabla
w{\ran}^\frac{p-2}{2}Q\nabla w\big) - \tau_k |w|^{p-2}w 
&= \vphi_k\text{ in } \Theta\\
w& =0\text{ on } \partial\Theta
\end{cases}
\]
where
$\vphi_k =
\|\uu_{\tau_k}\|_{\widehat{H}^{1,p}_{Q,0}(\Theta)}^{1-p}\varphi$.  By
Proposition \ref{CRWpro}, $\widehat{H}^{1,p}_{Q,0}(\Theta)$ is
compactly embedded in $L^p(\Theta)$.  Therefore, since
$\{\vv_{\tau_k}\}$ is a bounded sequence in
$\widehat{H}^{1,p}_{Q,0}(\Theta)$,  by passing to a subsequence
(renumbered for simplicity of notation) we have that there exists
$v\in L^p(\Theta)$ such that $v_{\tau_k}\ra v$ in $L^p(\Theta)$.  
Furthermore, arguing as we did above to prove $\tau_k\rightarrow 0$,
we have a Caccioppoli-type estimate: 
\begin{multline*}
 \int_\Theta
 {\lan}Q\vec{h}_{\tau_k},\vec{h}_{\tau_k}{\ran}^\frac{p}{2}\,dx 
=  \int_\Theta
{\lan}Q\vec{h}_{\tau_k},\vec{h}_{\tau_k}{\ran}^\frac{p-2}{2}
{\lan}Q\vec{h_{\tau_k}},\vec{h}_{\tau_k}{\ran}\,dx 
\\
\leq \ma_{p,\tau_k}\vv_{\tau_k}(\vv_{\tau_k})
\leq \bigg|\int_\Theta v_{\tau_k}\vphi_k\,dx\bigg| 
\leq \|\vphi_k\|_{L^{p'}(\Theta)}.  
\end{multline*}
By definition, 
$\|\vphi_k\|_{L^{p'}(\Theta)}\ra 0$ as $k\ra\infty$; hence, 
$\vv_{\tau_k}\ra \vv_0=(v,0)$ in $\widehat{H}^{1,p}_{Q,0}(\Theta)$ norm.
But then we have that
\[ \|v\|_{L^p(\Theta)} =
  \|\vv_0\|_{\widehat{H}^{1,p}_{Q,0}(\Theta)}=1. \]

By the Poincar\'e inequality \eqref{localpc} we have that for  any
$y\in \Theta$ and $r>0$ sufficiently small, 
\[ \int_{B(y,r)} |v-v_{B(y,r)}|^pdx = 0.  \]
Hence $v$ is constant a.e. (or, more properly, constant on each
connected component of $\Theta$).  We claim that $v=0$ a.e., which
would contradict the fact that $\|v\|_{L^p(\Theta)}=1$.  To show this,
extend ${\vv_0}$ to all of $\Omega$ as follows.  Let
$\{\psi_j\}\subset Lip_0(\Theta)$ be such that
$(\psi_j,\nabla\psi_j)\rightarrow {\vv_0}$ in
$\widehat{H}^{1,p}_{Q,0}(\Theta)$ norm.  Define $\eta_j \in Lip_0(\Omega)$
so that $\eta_j=\psi_j$ in $\Theta$ and $\eta_j=0$ on $\Omega\setminus
\Theta$.  Then $\{(\eta_j,\nabla\eta_j)\}$
is Cauchy in the $\widehat{H}^{1,p}_{Q}(\Omega)$ norm and so
converges to some 
$\ww_0=({w},0)\in
\widehat{H}^{1,p}_{Q,0}(\Omega)$.  If we again
apply 
Poincar\'e's inequality,  we see that $\bar{v}$ is constant in $\Omega$
(since $\Omega$ is connected).   However, $w = v$ in
$\Theta$ and $w=0$ in $\Omega\setminus\Theta$, and so we must have
that $v=0$ a.e.   

From this contradiction we have that our assumption is false and
so~\eqref{tauindepgoal} holds with a constant independent of $\tau$.
This completes our proof.
\end{proof}

\section{Proof of Theorem \ref{mainlocglob}}
\label{section:main-proof}

Before proving our main result, we give one more lemma, a product rule
for degenerate Sobolev spaces.  The proof is adapted from the proof
of~\cite[Proposition~2.2]{MRW}.

\begin{lem} \label{prodlem} 
Given $1\leq p<\infty$ and $1<t\leq\infty$, 
suppose $\sqrt{Q}\in L^{pt}_{loc}(\Omega)$ and
  that the local Poincar\'e property of order $p$ with gain
  $t'$ holds.    Let $\Theta\Subset\Omega$.  If
  $\uu=(u,\vec{g})\in \widehat{H}^{1,p}_Q(\Theta)$ and
  $v\in Lip_0(\Theta)$, then
  $(uv,v\vec{g} + u\nabla v)\in \widehat{H}^{1,p}_{Q,0}(\Theta)$.
\end{lem}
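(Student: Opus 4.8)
The plan is to prove the product rule by approximation, reducing the general case to the case $\uu \in Lip_Q(\Theta)$ and then passing to the limit. First I would handle the Lipschitz case: if $u \in Lip_Q(\Theta)$ and $v \in Lip_0(\Theta)$, then $uv \in Lip_0(\Theta)$ (the support of $uv$ is contained in $\supp v \Subset \Theta$), and the classical product rule gives $\nabla(uv) = v\nabla u + u\nabla v$ pointwise a.e. Since $v$ and $\nabla v$ are bounded and $u$ is bounded on $\supp v$, and since $\sqrt Q \nabla u \in \mathcal L^p_Q(\Theta)$ while $\sqrt Q \in L^p_{loc}(\Omega) \supset L^p(\supp v)$, both $v\sqrt Q\nabla u$ and $u\sqrt Q\nabla v$ lie in $L^p$; hence $(uv, v\nabla u + u\nabla v) = (uv,\nabla(uv)) \in \widehat H^{1,p}_{Q,0}(\Theta)$ by the remark in the excerpt that $Lip_0$ functions embed via their classical gradient.

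Next I would treat a general $\uu = (u,\vec g) \in \widehat H^{1,p}_Q(\Theta)$. Choose $\{u_j\} \subset Lip_Q(\Theta)$ with $(u_j,\nabla u_j) \to (u,\vec g)$ in $\widehat H^{1,p}_Q(\Theta)$; that is, $u_j \to u$ in $L^p(\Theta)$ and $\sqrt Q \nabla u_j \to \sqrt Q \vec g$ in $L^p(\Theta)$. Fix $v \in Lip_0(\Theta)$ and set $K = \supp v \Subset \Theta$. By the first step, $(u_j v, v\nabla u_j + u_j\nabla v) \in \widehat H^{1,p}_{Q,0}(\Theta)$ for each $j$. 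I claim this sequence is Cauchy in the $\widehat H^{1,p}_Q(\Theta)$ norm. The $L^p$-part, $u_j v - u_k v$, goes to zero since $u_j \to u$ in $L^p$ and $v$ is bounded. For the gradient part, write
\[
\sqrt Q\big[(v\nabla u_j + u_j\nabla v) - (v\nabla u_k + u_k\nabla v)\big]
= v\,\sqrt Q(\nabla u_j - \nabla u_k) + (u_j - u_k)\,\sqrt Q\nabla v .
\]
The first term tends to zero in $L^p$ since $\|v\|_\infty < \infty$ and $\sqrt Q\nabla u_j$ is Cauchy in $L^p$. The second term is where the Poincaré hypothesis enters: I need $(u_j - u_k)\sqrt Q\nabla v \to 0$ in $L^p(K)$, and I only control $u_j - u_k$ in $L^p$, not pointwise, so Hölder with $\sqrt Q \in L^p_{loc}$ alone is not enough — I need $\sqrt Q \in L^{pt}_{loc}$ together with higher integrability of $u_j - u_k$ on $K$. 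That higher integrability comes from the local Poincaré inequality of gain $t'$, summed over a finite cover of $K$ by $\rho$-balls of small radius (and using that constants are controlled), exactly as in \cite[Proposition~2.2]{MRW}: it yields $u_j - u_k \in L^{pt'}$ locally with norm bounded by $\|\sqrt Q(\nabla u_j - \nabla u_k)\|_{L^p} + \|u_j - u_k\|_{L^p} \to 0$. Then Hölder with exponents $t'$ and $t$ (note $\tfrac1{t'} + \tfrac1t = 1$ when $t' = \tfrac{t}{t-1}$; here one takes $t' \le \tfrac{t}{t-1}$ so this is fine) gives $\|(u_j - u_k)\sqrt Q\nabla v\|_{L^p(K)} \le \|u_j - u_k\|_{L^{pt'}(K)}\|\sqrt Q\nabla v\|_{L^{pt}(K)} \to 0$.

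Therefore $\{(u_j v,\, v\nabla u_j + u_j\nabla v)\}$ is Cauchy in $\widehat H^{1,p}_{Q,0}(\Theta)$ and converges to some pair in that space; since the $L^p$-component converges to $uv$ and, by the same estimates, the $\mathcal L^p_Q$-component converges to $v\vec g + u\nabla v$ (here $u_j\nabla v \to u\nabla v$ uses the Poincaré-based higher integrability of $u_j$ itself on $K$, and $v\sqrt Q\nabla u_j \to v\sqrt Q\vec g$ in $L^p$), we conclude $(uv,\, v\vec g + u\nabla v) \in \widehat H^{1,p}_{Q,0}(\Theta)$, as desired. The main obstacle is the passage to the limit in the $u_j\nabla v$ and $(u_j-u_k)\nabla v$ terms: it is precisely here that the hypothesis $\sqrt Q \in L^{pt}_{loc}$ and the local Poincaré inequality with gain $t'$ are indispensable — without them the approximating sequence need not be Cauchy in the degenerate norm, since $L^p$-convergence of $u_j$ does not interact well with the (possibly unbounded) weight $\sqrt Q$ near $\partial K$. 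When $Q$ is locally bounded this difficulty disappears, which is why the remark after Definition~\ref{pcprop} notes that \eqref{localpc} is not needed in that case.
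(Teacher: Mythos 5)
Your proof takes essentially the same route as the paper's: approximate $u$ by Lipschitz functions in $Lip_Q(\Theta)$, form the products with $v$, and pass to the limit in the degenerate Sobolev norm, with the key technical step being to control the cross term $(u_j - u_k)\sqrt{Q}\nabla v$ via H\"older, pairing the $L^{pt}$ integrability of $\sqrt{Q}\nabla v$ against higher ($L^{pt'}$) integrability of $u_j - u_k$ obtained from the local Poincar\'e inequality via a covering argument. The paper shows direct convergence of the gradient term to $v\vec{g}+u\nabla v$ rather than Cauchyness, but that difference is cosmetic; and it likewise derives the needed $L^{pt'}$ control by first upgrading \eqref{localpc} to a weak global Sobolev-type inequality \eqref{eqn:weak-sob} on $\Theta$.

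One point to correct: you write ``here one takes $t' \leq \tfrac{t}{t-1}$ so this is fine,'' but the inequality must go the other way. For H\"older to close up, with $|\sqrt{Q}\nabla v|^p \in L^{t}$ and $|u_j-u_k|^p$ measured in $L^{t'}$, you need $\tfrac{1}{t}+\tfrac{1}{t'} \leq 1$, i.e.\ $t' \geq \tfrac{t}{t-1}$: the Poincar\'e gain must be \emph{at least} the conjugate exponent of $t$. If $t'$ were strictly smaller than $\tfrac{t}{t-1}$ the exponents would sum to more than $1$ and the estimate would fail. The paper's parameters are consistent with equality: taking $t=s/p$ in Theorem~\ref{mainlocglob} gives $t'=\tfrac{s}{s-p}=\tfrac{t}{t-1}$ exactly.
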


\begin{proof}
  By assumption, there exists a sequence $\{\psi_j\}\subset Lip_Q(\Theta)$ such that 
 $\psi_j \ra u$ in $L^p(\Theta)$ and  $\nabla \psi_j\ra \vec{g}$ in
 $\mathcal{L}^p_Q(\Theta)$ as $j\rightarrow \infty$.
For each $j\in\mathbb{N}$, define $\phi_j = \psi_jv$.  Then $\{\phi_j\}\subset Lip_0(\Theta)$ and
\[  \|uv - \phi_j\|_{L^p(\Theta)} \leq
  \|v\|_{L^\infty(\Theta)}\|u-\psi_j\|_{L^p(\Theta)}; \]
hence, $\phi_j \rightarrow uv$ in $L^p(\Theta)$ as $j\ra\infty$.

To complete the proof, we will  show that
$\nabla \phi_j \rightarrow v\vec{g} + u\nabla v$ in
$\mathcal{L}_Q^p(\Theta)$.  Since
$\nabla \phi_j = v\nabla \psi_j + \psi_j\nabla v$, we have that
\begin{align*}
& \int_\Theta |\sqrt{Q}(v\vec{g} + u\nabla v - v\nabla \psi_j -
\psi_j\nabla v)|^p\,dx \\
& \qquad \qquad \leq
 C\bigg[\int_\Theta |\sqrt{Q}(\vec{g}-\nabla\psi_j|^p|v|^pdx
+ \int_\Theta |\sqrt{Q}\nabla v|^p|u-\psi_j|^p\,dx\bigg] \\
& \qquad \qquad \leq 
C\bigg[\|v\|^p_{L^\infty(\Theta)}\int_\Theta |\sqrt{Q}(\vec{g}-\nabla\psi_j)|^p\,dx
+ \|\sqrt{Q}\nabla
v\|_{L^{pt}(\Theta)}^p\|u-\psi_j\|_{L^{pt'}(\Theta)}^p\bigg]. 
\end{align*}
 The first term on in the last line goes to $0$ by our choice of
 $\psi_j$ and if $t'=1$, then the second term does as well.  If
 $t'>1$, then 
to estimate the  second term, note first that it follows from the
local Poincar\'e inequality~\eqref{localpc} that for all $y$ and $r>0$
sufficiently small and $f\in Lip_0(\Omega)$,
\[ \bigg(\fint_{B(y,r)} |f|^{pt'}\,dx\bigg)^{\frac{1}{pt'}}
\leq Cr\bigg(\fint_{B(y,\beta r)}|\sqrt{Q}\nabla
f|^p\,dx\bigg)^{\frac{1}{p}}
+ \bigg(\fint_{B(y,r)} |f|^{p}\,dx\bigg)^{\frac{1}{p}}
\]
Therefore, by a partition of unity argument like that used to
prove the weak global Sobolev inequality~\eqref{weakglobalsob} from
the weak local Sobolev inequality~\eqref{localsob}, we have that
\begin{equation} \label{eqn:weak-sob}
 \bigg(\int_{\Theta} |f|^{pt'}\,dx\bigg)^{\frac{1}{pt'}}
\leq C\bigg(\int_{\Theta}|\sqrt{Q}\nabla
f|^p\,dx\bigg)^{\frac{1}{p}}
+ \bigg(\int_{\Theta} |f|^{p}\,dx\bigg)^{\frac{1}{p}}.
\end{equation}
But then, in the second term we have that
$\|\sqrt{Q}\nabla v\|_{L^{pt}(\Theta)}<\infty$ since $\nabla v\in
L^\infty(\Theta)$ and $Q\in L^{pt}_{loc}(\Omega)$.  
Moreover, by \eqref{eqn:weak-sob} we have that
\[ \|u-\psi_j\|_{L^{pt'}(\Theta)} \leq
  \|u-\psi_j\|_{\widehat{H}^{1,p}_{Q}(\Theta)}, \]
and the right-hand term goes to $0$ as $j\rightarrow \infty$.
Therefore, we have shown that $(uv,v\vec{g} + u\nabla v)\in
\widehat{H}^{1,p}_{Q,0}(\Theta)$. 
\end{proof}

\medskip

\begin{proof}[Proof of Theorem~\ref{mainlocglob}]
Fix $v\in Lip_0(\Theta)$.  Our goal is to show that the global Sobolev
estimate~\eqref{sobgoal} holds.  It will be enough to show that for
some $\eta$, $1<\eta<\sigma$,
\begin{equation} \label{eqn:sob-eta}
 \bigg(\int_\Theta |v|^{p\eta}\,dx\bigg)^{\frac{1}{p\eta}}
\leq C\bigg(\int_\Theta |\sqrt{Q} \nabla
v|^p\,dx\bigg)^{\frac{1}{p}}. 
\end{equation}
For given this, by the weak global Sobolev
inequality~\eqref{weakglobalsob}, we have that
\begin{multline*}
 \bigg(\int_\Theta |v|^{p\sigma}\,dx\bigg)^{\frac{1}{p\sigma}}
 \leq  C\bigg[\bigg(\int_\Theta |\sqrt{Q}\nabla
  v|^p\,dx\bigg)^\frac{1}{p} 
+ \bigg(\int_\Theta |v|^p\,dx\bigg)^\frac{1}{p}\bigg] \\
\leq C\bigg[\bigg(\int_\Theta |\sqrt{Q}\nabla
  v|^p\,dx\bigg)^\frac{1}{p} 
+ \bigg(\int_\Theta |v|^{p\eta}dx\bigg)^\frac{1}{p\eta}\bigg]
\leq C\bigg(\int_\Theta |\sqrt{Q}\nabla v|^pdx\bigg)^\frac{1}{p},
\end{multline*}
and this is the desired inequality.

\smallskip

To prove~\eqref{eqn:sob-eta}, fix $q\in
[p',\infty)\cap(p\sigma',\infty)$ and let $\eta=q'>1$; note that 
$1<\eta<\sigma$ since $q>p\sigma'>\sigma'$.  Then by duality we have that
\begin{equation} \label{eqn:dual}
 \bigg(\int_\Theta |v|^{p\eta}\,dx\bigg)^\frac{1}{p\eta} 
= 
\bigg[\bigg(\int_\Theta (|v|^p)^\eta \,dx\bigg)^\frac{1}{\eta}\bigg]^{\frac{1}{p}}
=
\sup\bigg[ \int_\Theta \vphi |v|^p\,dx\bigg]^\frac{1}{p}, 
\end{equation}
where the supremum is taken over all non-negative $\varphi \in
L^q(\Theta)$, $\|\varphi\|_{L^q(\Theta)}=1$.  

Fix  a non-negative function $\varphi \in
L^q(\Theta)$, $\|\varphi\|_{L^q(\Theta)}=1$;  we estimate the
last integral.  Fix $\tau\in (0,1)$; the exact value of
$\tau$ will be determined below.   Since $q\geq p'$ and $\Theta$ is bounded,
$\varphi \in L^{p'}(\Theta)$, and so by Proposition~\ref{existence}, 
there exists $\uu_{\tau} = (u_\tau,\vec{g}_\tau)\in
\widehat{H}^{1,p}_{Q,0}(\Theta)$ that is a weak solution
of the Dirichlet problem~\eqref{eqn:dirichlet}.  Since $|v|^p\in Lip_0(\Theta)$, we can use
it as a test function in the definition of weak solution. This yields
\[\int_\Theta{\lan}Q\vec{g}_\tau,\vec{g}_\tau
{\ran}^\frac{p-2}{2}{\lan}Q\vec{g}_\tau,\nabla (|v|^p){\ran}\,dx 
+ \tau\int_\Theta|u_\tau|^{p-2}u_\tau |v|^p\,dx = -\int_\Theta \varphi
|v|^p\,dx. \]
If we take absolute values, rearrange terms and apply H\"older's inequality,  we get
\begin{align}
 \int_\Theta \varphi|v|^p\,dx 
& \leq  p\bigg|\int_\Theta
  {\lan}Q\vec{g}_\tau,\vec{g}_\tau{\ran}^\frac{p-2}{2}
{\lan}Q\vec{g}_\tau,\nabla v{\ran}|v|^{p-1}\sgn(v)\,dx\bigg| \nonumber\\
& \qquad \qquad + \tau\bigg|\int_\Theta|u_\tau|^{p-2}u_\tau |v|^p\,dx\bigg|\nonumber\\
& \leq p\int_\Theta
  {\lan}Q\vec{g}_\tau,\vec{g}_\tau{\ran}^\frac{p-1}{2}
{\lan}Q\nabla v,\nabla v{\ran}^\frac{1}{2}|v|^{p-1}\,dx \nonumber\\
& \qquad \qquad + \tau \|u_\tau\|_{L^\infty(\Theta)}^{p-1}\|v\|^p_{L^p(\Theta)}\nonumber\\
& \leq p\bigg( \int_\Theta
  {\lan}Q\vec{g}_\tau,\vec{g}_\tau{\ran}^\frac{p}{2}|v|^p\,dx\bigg)^{\frac{1}{p'}}
  \bigg(\int_\Theta{\lan}Q\nabla v,\nabla
       v{\ran}^\frac{p}{2}\,dx\bigg)^\frac{1}{p} \label{final01}\\
& \qquad \qquad + \tau
       \|u_\tau\|_{L^\infty(\Theta)}^{p-1}\|v\|^p_{L^p(\Theta)}. \nonumber
\end{align}

To estimate \eqref{final01}, define
\[  {\bf A} = \int_\Theta
  {\lan}Q\vec{g}_\tau,\vec{g}_\tau{\ran}^\frac{p}{2}|v|^p\,dx. \]
Let $\vec{h} = pu_\tau |v|^{p-1}\sgn(v)\nabla v + |v|^p\vec{g}_\tau$;
then by Lemma \ref{prodlem} we have
$(u_\tau |v|^p, \vec{h})\in \widehat{H}^{1,p}_{Q,0}(\Theta)$.
Moreover, we have that
\[ \int_\Theta {\lan}Q\vec{g}_\tau,\vec{g}_\tau{\ran}^\frac{p-2}{2}
{\lan}Q\vec{g}_\tau, \vec{h}{\ran}\,dx 
= {\bf A} + p\int_\Theta
{\lan}Q\vec{g}_\tau,\vec{g}_\tau{\ran}^\frac{p-2}{2}
{\lan}Q\vec{g}_\tau,\nabla v{\ran}u_\tau\sgn(v) |v|^{p-1}\,dx.
\]
Since $\uu_\tau$ is a weak solution of~\eqref{eqn:dirichlet} and
$(u_\tau |v|^p,\vec{h})$ can be used as a test function, we have that
\begin{align*} 
\qquad {\bf A}
&\leq \bigg|\int_\Theta
  {\lan}Q\vec{g}_\tau,\vec{g}_\tau{\ran}^\frac{p-2}{2}
{\lan}Q\vec{g}_\tau, \vec{h}{\ran}\,dx\bigg| 
+ p\bigg|\int_\Theta
  {\lan}Q\vec{g}_\tau,\vec{g}_\tau{\ran}^\frac{p-2}{2}
{\lan}Q\vec{g}_\tau,\nabla v{\ran}u_\tau \sgn(v)|v|^{p-1}\,dx\bigg|\nonumber\\
&\leq \bigg|\int_\Theta \varphi u_\tau |v|^pdx\bigg| 
+ \tau\int_\Theta |u_\tau|^p|v|^pdx\nonumber\\
& \qquad \qquad  +\; p \int_\Theta
  {\lan}Q\vec{g}_\tau,\vec{g}_\tau{\ran}^\frac{p-1}{2}
{\lan}Q\nabla v,\nabla v{\ran}^\frac{1}{2}|u_\tau| |v|^{p-1}\,dx\nonumber\\
&\leq \bigg|\int_\Theta \varphi u_\tau |v|^p\,dx\bigg| 
+ \tau\int_\Theta |u_\tau|^p|v|^p\,dx 
 + C(p)\int_\Theta {\lan}Q\nabla v,\nabla v{\ran}^\frac{p}{2}|u_\tau|^p
  \,dx \\
& \qquad \qquad
+\frac{1}{2} \int_\Theta
  {\lan}Q\vec{g}_\tau,\vec{g}_\tau{\ran}^\frac{p}{2}|v|^p\,dx; 
\end{align*}
the last inequality follows from Young's inequality.   The last term
equals $\frac{1}{2}\bf A$.  Moreover, since $\|\varphi\|_{L^q(\Theta)}=1$, by
Proposition~\ref{tauindep} there exists a constant $K$, independent of
$\tau$, such that
$\|u_\tau\|_{L^\infty(\Theta)}\leq K$.  Therefore, the above
inequality yields
\begin{equation} \label{abound2}
{\bf A} 
\leq C(p)\bigg[K\int_\Theta \varphi|v|^p\,dx 
+ \tau K^p
 \|v\|_{L^p(\Theta)}^p +K^p
\int_\Theta{\lan}Q\nabla v,\nabla v{\ran}^\frac{p}{2}\,dx\bigg]. 
\end{equation}

Irrespective of which term on the right-hand side of \eqref{abound2}
is the maximum, if we combine \eqref{final01} with \eqref{abound2} we get
\begin{equation} \label{finalgrail} 
\int_\Theta \varphi |v|^p\,dx 
\leq C(p,K) \bigg[ \int_\Theta {\lan}Q\nabla v,\nabla
v{\ran}^\frac{p}{2}\,dx 
+ \tau  \|v\|_{L^p(\Theta)}^p\bigg].
\end{equation}
To see that this is true, suppose first that 
the largest term is
$\tau K^p\|v\|_{L^p(\Theta)}^p$.  Then by
Young's inequality we
have that
\begin{align*}
\int_\Theta \varphi |v|^p\,dx 
&\leq C\tau^\frac{p-1}{p}K^{p-1}\|v\|_{L^p(\Theta)}^{p-1}
\bigg(\int_\Theta {\lan}Q\nabla v,\nabla
  v{\ran}^\frac{p}{2}\,dx\bigg)^\frac{1}{p}
+ \tau K^{p-1}\|v\|_{L^p(\Theta)}^p\\
&= CK^{p-1}\bigg[\tau^\frac{p-1}{p}\|v\|_{L^p(\Theta)}^{p-1}
\bigg(\int_\Theta {\lan}Q\nabla v,\nabla
    v{\ran}^\frac{p}{2}\,dx\bigg)^\frac{1}{p} 
+ \tau \|v\|_{L^p(\Theta)}^p\bigg]\\
&\leq CK^{p-1}\bigg[\int_\Theta {\lan}Q\nabla v,\nabla
  v{\ran}^\frac{p}{2}\,dx
 + \tau\|v\|_{L^p(\Theta)}^p\bigg].
\end{align*}
The other estimates are proved similarly.

Given inequality~\eqref{finalgrail} it is now straightforward to prove
the desired estimate:
\begin{align*}
\bigg(\int_\Theta \vphi |v|^p\,dx\bigg)^\frac{1}{p}
&\leq C(p,K)^\frac{1}{p}\bigg[\int_\Theta{\lan}Q\nabla v,\nabla
v{\ran}^\frac{p}{2}\,dx 
+ \tau\|v\|_{L^p(\Theta)}^p\bigg]^\frac{1}{p} \\
&\leq C(p,K) ^\frac{1}{p}
\bigg[\bigg(\int_\Theta{\lan}Q\nabla v,\nabla
  v{\ran}^\frac{p}{2}\,dx\bigg)^\frac{1}{p} 
+ \tau^\frac{1}{p}\bigg(\int_\Theta |v|^{p\eta}dx\bigg)^\frac{1}{p\eta}\bigg].
\end{align*}
Fix $\tau<1$ such that $\tau C(p,K) \leq \frac{1}{2}$.  Since this
constant is independent of $\varphi$, if we combine this inequality with the
duality estimate~\eqref{eqn:dual}, we get that
\[ \bigg(\int_\Theta |v|^{p\eta}\,dx\bigg)^{\frac{1}{p\eta}}
\leq C(p,K)\bigg(\int_\Theta{\lan}Q\nabla v,\nabla
  v{\ran}^\frac{p}{2}\,dx\bigg)^\frac{1}{p} 
+ \frac{1}{2}\bigg(\int_\Theta |v|^{p\eta}dx\bigg)^\frac{1}{p\eta}. \]
If we re-arrange terms we get \eqref{eqn:sob-eta} and our proof is complete.
\end{proof}


\bibliographystyle{plain}

\end{document}